\newcommand{\R}{\mathbb{R}}
\newcommand{\h}{\mathbb{H}}
\newcommand{\s}{\mathbb{S}}
\newcommand{\boR}{\mathcal{R}}
\newcommand{\boL}{\mathcal{L}}
\newcommand{\Ome}{\Omega}
\newtheorem{theorem}{Theorem}[section]
\newtheorem{lemma}[theorem]{Lemma}
\newtheorem{remark}[theorem]{Remark}
\newtheorem{defn}[theorem]{Definition}
\newtheorem*{fact}{Fact}
\begin{document}
\setcounter{page}{1}
\title{$f$-extremal domains in hyperbolic space}
\author{Jos\'{e} M. Espinar$^{\dag}$, Alberto Farina$^{\ddag}$, Laurent Mazet$^{*}$}
\date{}
\maketitle ~~~\\[-15mm]

\begin{center}
{\footnotesize $^{\dag}$Instituto Nacional de Matem\'{a}tica Pura e Aplicada, 110 Estrada Dona Castorina, Rio de Janeiro, 22460-320, Brazil \\
Email: jespinar@impa.br\\}
{\footnotesize $^{\ddag}$LAMFA, CNRS UMR 7352, Facult\'{e} des Sciences, Universit\'{e} de Picardie Jules Verne  33, rue Saint Leu, 80039, Amiens CEDEX 1, France \\
Email: alberto.farina@u-picardie.fr\\}
{\footnotesize $^{*}$Laboratoire d'Analyse et Math\'{e}matiques Appliqu\'{e}es Universit\'{e} Paris-Est - Cr\'{e}teil, UFR des Sciences et Technologie, Avenue du G\'{e}n\'{e}ral de Gaulle, 94010, Cr\'{e}teil, France \\
Email: laurent.mazet@math.cnrs.fr\\}
\end{center}


\begin{abstract}
In this paper we study the geometry and the topology of unbounded domains in the Hyperbolic Space $\h ^n$ supporting a bounded positive solution to an overdetermined elliptic problem. 

Under suitable conditions on the elliptic problem and the behaviour of the bounded solution at infinity, we are able to show that symmetries of the boundary at infinity imply symmetries on the domain itself. 

In dimension two, we can strengthen our results proving that a connected domain $\Omega \subset \h ^2$ with $C^2$ boundary whose complement is connected and supports a bounded positive solution $u$ to an overdetermined problem, assuming natural conditions on the equation and the behaviour at infinity of the solution, must be either a geodesic ball or, a horodisk or, a half-space determined by a complete equidistant curve or, the complement of any of the above example. Moreover, in each case, the solution $u$ is invariant by the isometries fixing $\Omega$.

 \end{abstract}

\mbox{}\\
{\bf MSC 2010:}  35Nxx; 53Cxx. 
\\
{\bf Key Words:} The moving plane method; Overdetermined Problems; Maximum principle; Neumann conditions; Exterior Domain; Hyperbolic Space. 

\markright{\sl\hfill  J.M. Espinar, A. Farina, L. Mazet  \hfill}

\section{Introduction}
\renewcommand{\thesection}{\arabic{section}}
\renewcommand{\theequation}{\thesection.\arabic{equation}}
\setcounter{equation}{0} \setcounter{maintheorem}{0}

Solving an elliptic partial derivative equation under Dirichlet or Neumann data is a
classical problem but trying to impose both Dirichlet and Neumann data leads to a so
called overdetermined elliptic problem (OEP) and solutions should be very rare. For
example, consider the following problem in a domain (open and connected) $\Ome$ of $\R^n$,
\begin{eqnarray}\label{eq:oep}
\left\{
\begin{array}{llll}
\Delta{u}+f(u)=0  \quad&\text{in }   \Omega,\\
u>0  \quad&\text{in }   \Omega,\\
u=0  \quad&\text{on }\partial\Omega,\\
\langle\nabla{u},\vec{v}\rangle_{\R^n}=\alpha \quad&\text{on } \partial\Omega,
\end{array}
\right.
\end{eqnarray}
where $\vec\nu$ is the unit outward normal vector along $\partial\Ome$. A domain where the OEP~\eqref{eq:oep} can be solved is called a $f$-extremal domain. If $\Ome$ is bounded and $f\equiv 1$, Serrin~\cite{s} proved that the ball is the only domain where the above problem admits a solution $u$ (this was generalized later to any Lipschitz function $f$). The proof of Serrin uses the moving plane method that was introduced by Alexandrov in \cite{a} in order to prove that round spheres are the only constant mean curvature embedded hypersurfaces in $\R^n$.

In \cite{bcn}, Berestycki, Caffarelli and Nirenberg considered the above problem for
unbounded domains in $\R^n$ and proved that, under some additional hypotheses on $f$, the only $f$-extremal domain that is an epigraph is a halfspace. Moreover, they stated the following conjecture.

\medskip

\textbf{BCN conjecture.} If $f$ is Lipschitz, and $\Ome$ is a smooth connected domain with
$\R^n\setminus \overline\Ome$ connected where the OEP~\eqref{eq:oep} admits a bounded
solution, then $\Ome$ is either a ball, a halfspace, a cylinder $B^k\times\R^{n-k}$ ($B^k$
is a ball of $\R^k)$ or the complement of one of them.

\medskip

This conjecture is also motivated by the work of Reichel~\cite{Rei} concerning exterior
domains. Besides it has inspired many interesting results: for example, the works of
Farina and Valdinoci~\cite{fv1,fv2,fv3} about epigraphs or the one of Ros and
Sicbaldi~\cite{rs} concerning planar domains. Actually, in \cite{Sic}, Sicbaldi gave a
counterexample to BCN conjecture
in $\R^n$ for $n\ge 3$. But understanding the geometry of $f$-extremal domains is still an
interesting question and one of the main point is the similarity of the geometry of these
domains with the one of constant mean curvature hypersurfaces. Exploiting this similarity,
Ros, Ruiz and Sicbaldi \cite{RRS} proved that in dimension $2$ the BCN conjecture is true
for unbounded domains whose complement is unbounded: it has to be a halfplane.
We also refer to \cite{fv4} for the study of overdetermined elliptic problems on a complete, non-compact Riemannian manifold without boundary and with non-negative Ricci tensor.

In this paper, we are interested in the geometry of $f$-extremal unbounded domains in the
hyperbolic space. More precisely, let $\Omega \subset \h ^n $ be a domain (open and
connected) whose boundary, if not empty,
is of class $C^2$ and consider the following OEP
\begin{eqnarray} \label{1.4}
\left\{
\begin{array}{llll}
\Delta{u}+f(u)=0  \quad&\text{in }   \Omega,\\
u>0  \quad&\text{in }   \Omega,\\
u  \text{ bounded } & \text{in }  \Omega ,\\ 
u=0  \quad&\text{on }\partial\Omega,\\
u (p)\to C \quad & \text{uniformly as }  d( p ,\partial  \Omega ) \to +\infty\\
\langle\nabla{u},\vec{v}\rangle=\alpha \quad&\text{on } \partial\Omega,
\end{array}
\right.
\end{eqnarray}where $\langle\cdot,\cdot\rangle$ is the inner product on $\h ^n$ induced by
the hyperbolic metric, $d$ the hyperbolic distance, $\vec{v}$ the unit outward normal
vector along the boundary $\partial\Omega$ (we will also use the notation $\partial_\nu u$
for $\langle\nabla u,\vec\nu\rangle$), $\alpha$ a non-positive constant.

The function $f$ will be subject to the following assumptions:
\begin{align}
&f\textrm{ is Lipschitz,}\tag{H1}\label{hyp:h1}\\
&f\textrm{ is non-increasing}.\tag{H2}\label{hyp:h2}
\end{align}
Hypothesis~\eqref{hyp:h1} corresponds to the one in the BCN conjecture and is made all
along the paper so it would not be mentioned in the statements of the results. Concerning
Hypothesis~\eqref{hyp:h2}, it will not be assumed in some results of
Section~\ref{sec:dim2}, so we will precise when it is assumed.

The study of $f$-extremal domains in $\h^n$
 already appears in the work of Espinar and Mao~\cite{EM}. They
use the fact that the hyperbolic space can be compactified by its ideal boundary
$\partial_\infty\h^n$; so $f$-extremal domains can be studied in terms of their trace on
$\partial_\infty \h^n$, one of their results states that a $f$-extremal domain whose trace
on $\partial_\infty\h^n$ is at most one point is either a geodesic ball or a horoball
(hypothesis~\eqref{hyp:h2} is not need in this result), this generalizes, to any $f$,
results by Molzon~\cite{mr} and Sa Earp and Toubiana~\cite{SETou}.

Here, our study looks at $f$-extremal domains whose trace on $\partial_\infty\h^n$ is
larger, for example we prove that a
$f$-extremal domain whose complement is bounded is the complement of a geodesic ball. This result
is similar to the one of Reichel~\cite{Rei} in the Euclidean case. We also give a
characterization of the complement of a horoball.

We also prove that if the trace on $\partial_\infty\h^n$ of $\partial\Ome$ (with $\Ome$ is
a $f$-extremal domains) is some asymptotic equator (see Section~\ref{sec:hypgeo} for a
precise definition) then $\Ome$ has to be invariant by a big subgroup of hyperbolic
isometries. This result can be compared with the result of Berestycki, Caffarelli and
Nirenberg concerning epigraphs.

As mentioned above, the geometry of $f$-extremal domains seems to imitate the geometry of
constant mean curvature hypersurfaces. So it is interesting to compare our results with
the ones obtained by do~Carmo and Lawson in~\cite{dCL}  and Levitt and Rosenberg in \cite{LR} where constant mean curvature hypersurfaces in $\h^n$ are characterized by their trace on $\partial_\infty\h^n$.

The paper is organized as follows. In Section~\ref{sec:hypgeo}, we recall some aspects of
hyperbolic geometry and fix some notations used in the following sections.
Section~\ref{sec:mpm} is devoted to the study of exterior domains. In
Section~\ref{sec:inv}, we prove a result concerning the invariance by hyperbolic
translations of $f$-extremal domains. In the last section, we study $f$ extremal domains
$\Ome$ in $\h^2$ without hypothesis~\eqref{hyp:h2}, the main point is to understand the
asymptotic behaviour of $\partial\Ome$ when it is connected.


\section{Preliminaries about hyperbolic geometry}\label{sec:hypgeo}

In this section, we will give an exposition of some aspects of the Hyperbolic Space for
the reader convenience.


\subsection{The hyperbolic space and its ideal boundary}

The hyperbolic space $\h^n$ ($n\ge 2$) is (up to isometry) the only simply connected
manifold of constant sectional curvature $-1$. It is well known that the cut locus of any
point on $\h ^n$ is empty, which implies that for any two points on $\h ^n$ there is a
unique geodesic joining them. Therefore, the concept of geodesic convexity can be
naturally defined for subsets of $\h ^n $.

If $(p,v)$ is an element of the unit tangent bundle $U\h^n$, we define the half geodesic
starting from $p$ with initial speed $v$ as the geodesic $\gamma_v :[0,+\infty)\to \h^n$
with $\gamma_v(0)=p$ and $\gamma_v'(0)=v$. We say that two half geodesics $\gamma_1$ and
$\gamma_2$ are \emph{asymptotic} if there exists a constant $c$ such that the distance
$d(\gamma_1(t),\gamma_2(t))$ is less than $c$ for all $t\geqslant 0$. Similarly,
two unit vectors $v_1$ and $v_2$ are asymptotic if the corresponding geodesics
$\gamma_{v_1}(t)$, $\gamma_{v_2}(t)$ have this property. It is easy to find that being
asymptotic is an equivalence relation on the set of unit-speed half geodesics or on the set of
unit vectors on $\h ^n$. Each equivalence class is called a point at
infinity. Denote by $\partial _{\infty} \h ^n$ the set of points at infinity, and denote
by $\gamma(+\infty)$ or $v(\infty)$ the equivalence class of the corresponding geodesic
$\gamma(t)$ or unit vector $v$. It is called the end-point of $\gamma$. If $\gamma:\R\to
\h^n$ is a unit speed geodesic line, we
denote by $\gamma(-\infty)$ the equivalence class of $s\mapsto \gamma(-s)$.

It is well-known that for two asymptotic half geodesics $\gamma_1$ and $\gamma_2$ in $\h
^n$, the distances $d(\gamma_1(t),\gamma_2)$ and $d(\gamma_2(t),\gamma_1)$ goes to zero as
$t\to +\infty$. Besides, for any $x, y\in \partial_{\infty} \h ^n$, there exists a unique
oriented unit speed geodesic $\gamma$ such that $\gamma(+\infty)=x$ and
$\gamma(-\infty)=y$; this geodesic will be denoted by $(yx)$.

For any point $p\in \h ^n$, there exists a bijective correspondence between unit vectors
at $p$ and $\partial _{\infty} \h ^n$. In fact, for a point $p\in \h ^n$ and a point $x\in
\partial_{\infty} \h ^n$, there exists a unique oriented unit speed geodesic $\gamma$ such
that $\gamma(0)=p$ and $\gamma(+\infty)=x$. Equivalently, the unit vector $v$ at the point
$p$ is mapped to the point at infinity $v(\infty)$. Therefore, $\partial_{\infty} \h ^n$
is bijective to a unit sphere, i.e., $\partial _\infty \h ^n \equiv \s ^{n-1}$. For
$p\in\h^n$ and $x\in\partial_\infty\h^n$, we denote by $(px)$ the half geodesic starting
at $p$ with end-point $x$.

Set $\overline{\h ^n}=\h ^n\cup \partial _{\infty} \h^n$. For a point $p\in \h ^n$,
$\mathcal {U}$ an open subset of the unit sphere of the tangent space $T_{p}\h ^n$ and
$r>0$, define
 \begin{eqnarray*}
 T(\mathcal{U},r):=\{\gamma_{v}(t)\in \overline{\h ^n} \,
 |v\in\mathcal{U},~r<t\leqslant+\infty\}.
\end{eqnarray*}
Then there is a unique topology $\mathscr{T}$ on $\overline{\h ^n}$ with the following
properties: open subsets of $\h^n$ are open subsets of $\mathscr{T}$ and the sets
$T(\mathcal{U},r)$ containing a point $x\in \partial_{\infty} \h ^n$ form an open
neighborhood basis at $x$. This topology is called the \emph{ideal topology} of
$\overline{\h ^n}$. Clearly, the ideal topology $\mathscr{T}$ satisfies the following
properties:

(A1) $\mathscr{T}|_{\h ^n}$ coincides with the topology induced by the
Riemannian distance;

(A2) for any $p\in \h ^n$ and any homeomorphism $h:[0,1]\rightarrow[0,+\infty]$, the
function $\varphi$, from the closed unit ball of $T_{p}\h ^n$ to $\overline{\h ^n}$, given
by $\varphi(v)=\exp_{p}(h(\|v\|)v)$ is a homeomorphism. Moreover, $\varphi$ identifies
$\partial_{\infty} \h ^n$ with the unit sphere;

(A3) for a point $p\in \h ^n$, the mapping $v\rightarrow v(\infty)$ is
a homeomorphism from the unit sphere of $T_{p}\h ^n$ onto $\partial_{\infty} \h ^n$.

(A4) with this topology, $\overline{\h ^n}$ is a compactification of $\h ^n$ \cite{GL}.

Using this topology, one can define the boundary at infinity of a subset $A$ of $\h ^n$.
Actually, we denote $\overline{A}^\infty$ the closure in $\overline{\h ^n}$ with the ideal
topology. Then $\partial_{\infty}A$ denotes the boundary at infinity of $A$, that is,
$\partial_\infty A=\overline{A}^\infty\cap \partial_\infty \h^n$. Also, denote by
$\mathrm{int}(\cdot)$ the interior of a given set of points.


\subsection{Some models}

\subsubsection*{Poincar\'{e} Ball Model}

There are several models for the hyperbolic space. Among them, the Poincar\'e Ball Model is
very interesting to visualize the hyperbolic geometry.

The Poincar\'{e} Ball model is $(\mathbb B ^{n} , g_{-1})$, where $\mathbb B ^{n}$ is the
Euclidean unit ball in $\R ^{n}$ and $g_{-1}$ is the Poincar\'{e} metric, which is given
at a point $x = (x_1, \ldots , x_n) \in \mathbb B ^n$ by 
\begin{equation}\label{Eq:Metric}
(g_{-1})_x := \frac{4}{(1-|x|^2)^2}\big(\sum _{i=1}^n dx_i ^2\big) ,
\end{equation}here $|\cdot |$ denotes the Euclidean norm. It is well known that, in this
model, the compactification $\overline\h^n$ identified with the closed unit ball and its
ideal boundary corresponds to
$$ \partial _\infty \mathbb H ^{n} = \partial \mathbb B ^{n} = \mathbb S ^{n-1} .$$

In this model, the geodesics are circle arcs (or segment) in $\mathbb{B}^n$ orthogonal to
$\mathbb S^n$. As a consequence, totally geodesic submanifolds are given by spherical caps
(or planar caps) meeting orthogonally the boundary of $\mathbb B^n$. Actually, more
generally, totally umbilical submanifolds of $\h^n$ are given by the intersection of totally
umbilical submanifolds in $\R^n$ with $\mathbb B^n$. 

For example, one can observe that for any point $p \in \R^n \setminus \overline{\mathbb
B^n}$ there exists a unique sphere $S_p$, whose radius is given by $r_p := |p|^2-1$,
that meet orthogonally $\partial \mathbb B ^n$. Hence, $P:=S_p \cap \mathbb B^n$ is a
totally geodesic hyperplane.

From \eqref{Eq:Metric}, one can see that the isometry group of $\h ^n$ is given by the
group of conformal transformations of $\R ^n$ that preserves $\mathbb B ^n$. In
particular, linear isometries are isometries of the model. Euclidean reflections through
hyperplanes containing the origin or inversions through spheres meeting orthogonally the
boundary of $\mathbb B ^n$ are then reflections with respect to totally geodesic
hyperplanes.

Before we continue, let us recall the relation between isometries of the Hyperbolic Space
$\mathbb H ^{n}$, ${\rm Iso}(\h ^n)$, and conformal diffeomorphisms on the sphere at
infinity $\mathbb S ^{n-1}$, ${\rm Conf}(\s^{n-1})$. Using the Poincar\'e ball model, an
isometry $\mathcal I \in {\rm Iso}(\mathbb H ^{n})$ induces a unique conformal
diffeomorphism $\Phi \in {\rm Conf}(\mathbb S ^{n-1} )$; actually this map is bijective. 

\subsubsection*{Halfspace Model}

Another useful model is the Halfspace Model: it is $\R^{n-1}\times(0,+\infty)$ endowed with the metric
\begin{equation}
\frac1{x_n^2}\big(\sum_{i=1}^n d x_i^2\big)
\end{equation}

Let $s=(0,\cdots,0,-1)\in\R^n$ then the map
$$
\Phi: x\mapsto 2\frac{x-s}{|x-s|^2}+s
$$
is conformal and realizes a bijection from $\mathbb B^n$ onto $\R^{n-1}\times(0,+\infty)$.
Actually $\Phi$ is an isometry between the Poincar\'e ball model and the halfspace model.
So properties of this model can be deduced from the preceding one using $\Phi$. For
example, the ideal boundary $\partial_\infty \h^n$ is identified with
$(\R^{n-1}\times\{0\})\cup\{\infty\}$ where $\infty$ is some point added in order to compactify
$\R^{n-1}\times\{0\}$. $\infty$ correspond to $s$ through $\Phi$.


\subsection{Submanifolds of $\h^n$}\label{sec:submanifd}

\subsubsection*{Totally geodesic hyperplanes}

A totally geodesic hyperplane can be characterized by its boundary at infinity. If $P$ is
such a totally geodesic hyperplane, the set $E=\partial_\infty P\subset \partial_\infty
\h^n$ is called an asymptotic equator and for any asymptotic equator $E$ there is a unique
totally geodesic hyperplane $P$ with $\partial_\infty P=E$.

In the Poincar\'e ball model, the asymptotic equators are the hyperspheres of $\mathbb
S^{n-1}$: that is, given any point $x \in \s ^{n-1}$ and radius $r \in (0 , \pi)$, the
submanifold $\partial B_{\mathbb S ^{n-1}}(x, r)\subset \mathbb S^{n-1}$ where $B_{\mathbb
S ^{n-1}}(x, r)$ is the geodesic ball in $\mathbb S ^{n-1}$ centered at $x$ of radius $r
\in (0,\pi)$. In particular, a classical equator centered at $x$, $E(x)$, appearing when $r = \pi
/2$ is an asymptotic equator. When $x$ is the north pole $(0,,\cdots,0,1)$, the hyperplane $P$ associated to $E(x)$ is given by
$$ P(0) := \{ (x_1 , \ldots , x_{n}) \in \mathbb B ^{n} \, : \, \, x_{n} =0 \}.$$

The equidistant hypersurfaces at distance $c$ to some totally geodesic hyperplane $P$ are given by
$$
P_c = \{ {\rm exp }_p (c \, N (p)) \in \h^{n} \, , \, \, p \in P\} ,
$$
where ${\rm exp}$ is the exponential map in $\h^n$ and $N$ is the unit normal along $P$.
Each $P_c$ is totally umbilic with constant principal curvatures $-\tanh (c)$, that
is, 
$$ II_{P_c} = -\tanh (c) I_{P_c},$$
where $I_{P_c}$ and $II_{P_c}$ denote the First and Second Fundamental Form respectively.
Here the orientation for $P_c$ is the one given to be coherent with the normal $N$ on
$P=P_0$. Actually, these equidistant hypersurfaces are the umbilic hypersurfaces of $\h^n$ with
principal curvatures in the interval $(-1,1)$.

Since $P_c$ is at a bounded distance from $P$, it implies that $\partial_\infty P_c=\partial_\infty P$.

\subsubsection*{Horospheres}

Now, based on the above brief introduction, we can define Busemann functions and then
horospheres. Given a unit vector $v $ in $ T\h ^n$, let $\gamma_{v}(t)$ be the oriented
geodesic on $\h^n$ satisfying $\gamma'_{v}(0)=v$, then the Busemann function $B_{v}:\h
^n\rightarrow\mathbb{R}$, associated to $v$, is defined by
\begin{eqnarray*}
B_{v}(p)=\lim\limits_{t\rightarrow+\infty}d(p,\gamma_{v}(t))-t.
\end{eqnarray*}

It is not difficult to see that this function has the following properties (cf. \cite{pe}):

(B1)  $B_{v}$ is a $C^2$ convex function on $\h ^n$;

(B2) the gradient $\nabla B_{v}(p)$ is the unique unit vector $-w$ at $p$ such that
$v(\infty)=w(\infty)$;

(B3) if $w$ is a unit vector such that $v(\infty)=w(\infty)$, then $B_{v}-B_{w}$ is a
constant function on $\h ^n$.

Given a unit vector $v$ in $T\h ^n$, denote by $x$ the point
$v(\infty)\in\partial_{\infty} \h ^n$. The horospheres based or centered at $x$ are
defined to be the \emph{level sets} of the Busemann function $B_{v}$. By (B3), the
horospheres at $x$ do not depend on the choice of $v$. The horoballs based at $x$ are
defined to be the \emph{sublevel sets} of the Busemann function : $\{p\in\h^n\,|\,
B_v(p)\le t\}$.

The horospheres at $x$ give a foliation of $\h ^n$, and by (B1), we know that each element
of this foliation bounds a convex domain in $\h ^n$ which is a horoball. By (B2),
the intersection between a geodesic $\gamma$ and a horosphere based at $\gamma(+\infty)$
is always orthogonal.

The horospheres are the umbilic hypersurfaces of $\h^n$ with constant principal curvatures
equal to $1$ or $-1$ depending one the choice of orientation. Moreover, the induced metric
on each horosphere is flat so they are isometric to $\R^{n-1}$.

In the Poincar\'e ball model, horospheres at $x\in\s^{n-1}\equiv \partial_\infty \h^n$
are given by spheres internally tangent to $\s^{n-1}$ at $x$. The tangency point $x$ is
the unique point at infinity of the horosphere. 

In the halfspace model, the horosphere based at $\infty$ are the horizontal hyperplanes
$\{x_n=c\}$ for $c>0$.


\subsection{Isometries}

Here, we will recall some important properties of the isometry group of $\h ^n$. We
already know that this group acts as the group of conformal transformations of $\R^n$ that
preserves $\mathbb B^n$. So one important fact is that $\mathrm{Iso}(\h^n)$ acts simply
transitively on the space of orthonormal bases of $T\h^n$; more precisely, if ${\bf B}_p$ and
${\bf B}_q$ are orthonormal bases of $T_p\h^n$ and $T_q\h^n$ respectively, then there is one and only one isometry of $\h^n$ sending $p$ to $q$ and ${\bf B}_p$ to ${\bf B}_q$. This property tells that, in a model, we can often assume that we are in some standard position. For example,
if $P$ is a totally geodesic hyperplane, we can assume that it is $\{x_n=0\}\subset \mathbb
B^n$ in the Poincar\'e ball model.

\subsubsection*{Reflections}

Let $P$ be a totally geodesic hyperplane of $\mathbb{H}^{n}$ with unit normal $N$. The
isometry $\boR_P$ fixing points in $P$ and sending $N$ to $-N$ is called the {\bf
reflection through $P$}. We have $\mathcal R _P\circ \mathcal R _P= {\rm Id}$, here ${\rm
Id}$ denotes the identity map. It is important to remark that the group ${\rm Iso}(\h ^n)$
is generated by reflections. 

Let $\Omega$ be a (bounded or unbounded) connected domain in $\h^n$ and
$\boR_{P}$ be the reflection through $P$. We say that $\Omega $ is symmetric with respect
to $P$ if $\boR_P (\Omega) = \Omega$.

\begin{defn}\label{Def:SymmetricP}
Let $P$ be totally geodesic hyperplane in $\h ^n$ and $\Omega$ a domain symmetric with
respect to $P$. A $C^2$ function $u : \Omega \to \mathbb R$ is {\bf symmetric with respect
to $P$} if
$$ u(p) = u(\mathcal R _P (p)) \text{ for all } p \in \Omega .$$
\end{defn}

\subsubsection*{Rotations}

Let $\beta$ be a geodesic. An isometry that preserves the orientation and fix all points
in $\beta$ is called a \textbf{rotation of axis $\beta$}. Actually, the set of rotations
around $\beta$ is a group isometric to $SO_{n-1}(\R)$ (the isomorphism is defined by
looking at the action of a rotation on  the orthogonal to $\beta'(0)$ in $T_{\beta(0)}\h^n$). So
we have a parametrization $\{\mathscr{R}_{\theta} ^{\beta}\}_{\theta\in SO_{n-1}(\R)}$ of
the group of rotations around $\beta$.

Moreover, one can check that any rotation around $\beta$ can be written as the
composition of an even number of reflections with respect to hyperplanes that contain
$\beta$.

If $n\ge 3$, we say that $\Omega $ is axially symmetric with respect to $\beta$ if $\mathcal R _\theta
^{\beta} (\Omega) = \Omega$ for all $\theta \in SO_{n-1}(\R)$. When $n=2$, we say that
$\Ome$ is axially symmetric with respect to $\beta$ if it is symmetric with respect to
$\beta$. Hence, we can define

\begin{defn}\label{Def:AxialSymmetric}
Let $\beta$ be complete geodesic in $\h ^n$ and $\Omega$ a domain axially symmetric with
respect to $\beta $. A $C^2$ function $u : \Omega \to \mathbb R$ is {\bf axially symmetric
w.r.t. $\beta$} if
$$ u(p) = u(\mathcal R^\beta _\theta (p)) \text{ for all } \theta \in SO_{n-1}(\R)\text{
and }p\in\Ome.$$

When $n=2$, $u$ is axially symmetric w.r.t $\beta$ if $u(p)=u(\mathcal R _\beta (p))$ for all $p
\in \Omega$, where $\mathcal R_\beta \in {\rm Iso}(\mathbb H^2)$ is the reflection that
leaves invariant $\beta$.
\end{defn}

\subsubsection*{Hyperbolic Translations}

If $\gamma:\R\to \h^n$ is a unit length geodesic and $t$, the {\bf hyperbolic translation along
$\gamma$ at distance $t$} is the isometry $\boL_\gamma^t$ such that
$\boL_\gamma^t(\gamma(s))=\gamma(s+t)$ and which acts by parallel transport along $\gamma$ on the tangent space.

If $\gamma$ is the geodesic joining the points $x$ and $y$ in $\partial_\infty\h^n$, the
conformal diffeomorphism induced by $\boL_\gamma^t$ on $\mathbb S^{n-1}=\partial_\infty\h^n$ fixes $x$ and $y$. Given any point $p \in \h ^n \setminus \gamma$, the orbit $\{ \mathcal L _{\gamma} ^t (p) \} _{t \in \R}$ is given by an equidistant curve $\gamma _c$ to $\gamma $ passing through $p$, where $c= {\rm dist}(p,\gamma)$.

Let $H_x(s)$ be the horosphere based at $x$ passing by $\gamma(s)$, then $\boL_\gamma^t(H_x(s))=H_x(s+t)$.

Let $P \subset \h ^n$ be a totally geodesic hyperplane and denote $E=\partial _\infty
P$. Let $P_{c}$ be the equidistant hypersurface to $P $ at distance $c$. Let $P_c^+$ and
$P_c^-$ be the two connected component of $\h ^n \setminus P_c$. Note that for any
complete geodesic $\gamma$ contained in $P$, i.e., $\gamma : \R \to P \subset \h ^n$, we have that
$\mathcal L _\gamma ^t (P_c^+) = P_c^+$. In other words, $\mathcal L_\gamma ^t$ leaves
$P_c^+$ invariant (the same is true for $P_c$ and $P_c^-$) for all $\gamma $ contained in
$P$ and for all $t$. This motivates:

\begin{defn}\label{Def:Translating}
A $C^2$ function $u : P_c^+ \to \mathbb R$ is {\bf translating invariant respect to $P$} if
$$
u(p) = u(\mathcal L^t _\gamma (p)) \text{ for all } \gamma \subset P,\ t \in \mathbb R
\text{ and }p\in\Ome,
$$
here $\gamma$ denotes a complete geodesic contained in $P$.
\end{defn}

Note that we could have considered the domain $P_c^-$ in the above definition.
Nevertheless, it is clear that the definition is analogous. Moreover, one can check that
given any complete geodesic $\gamma $ in $\mathbb H ^{n}$ and fixing $t\in \R$, there
exists two totally geodesic hyperplanes $P_1$ and $P_2$, both orthogonal to $\gamma$,
whose associated hyperbolic reflections $\mathcal R _1 ,\mathcal R _2 \in {\rm
Iso}(\mathbb H ^n)$ satisfy
\begin{equation}\label{Eq:TransHyperbolic}
\mathcal L ^\gamma _t = \mathcal R _1 \circ \mathcal R_2 .
\end{equation}

\subsubsection*{Parabolic Translations}

Given any point at infinity $x \in \partial _\infty \mathbb H ^n $, the {\bf parabolic
translations based at $x$} are the isometries of $\h^n$ that acts as Euclidean translations
on each horosphere $H_x$ based at $x$ for the induced Euclidean structure on $H_x$. As a
consequence the subgroup of parabolic translations is isomorphic to $\R^{n-1}$; we have the
parametrization $\{\mathcal T^x _{v} \}_{v\in \mathbb R^{n-1}}$. If $\{H_x(t)\}_{†\in\R}$
is the foliation of the horospheres based at $x$ we have
$$
\mathcal T ^x_v (H_x (t)) = H_x (t) \text{ for all } v\in \mathbb R ^{n-1} \text{ and } t\in \mathbb R.$$

If $P_1$ and $P_2$ are two totally geodesic hyperplanes such that $\partial_\infty P_1\cap
\partial_\infty P_2=\{x\}$ and $\boR_1$ and $\boR_2$ are the reflections with respect to
these hyperplanes then there is some $v\in\R^{n-1}$ such that
\begin{equation}\label{Eq:TransParabolic}
\mathcal R _1 \circ \mathcal R_2=\mathcal T ^x_v .
\end{equation}
Reciprocally, any parabolic translation $\mathcal T_v^x$ can be decomposed in this way.

If $H_x$ is some horosphere based at $x$, we denote by $H_x^+$ and $H_x^-$ the two
connected components of $\h^n\setminus H_x$ such that $\partial_\infty H_x^-=\{x\}$ and
$\partial_\infty H_x^+=\partial_\infty\h^n$; $H_x^-$ is the horoball bounded by $H_x$.
Then for any $v\in \R^{n-1}$, we have $\mathcal T_v^x(H_x^+)=H_x^+$ and $\mathcal T_v^x(H_x^-)=H_x^-$.

\begin{defn}\label{Def:HoroSymmetric}
A $C^2$ function $u : H_x^\pm \to \mathbb R$ is {\bf horospherically symmetric} if
$$ u(p) = u(\mathcal T^x _v (p)) \text{ for all } v \in \mathbb R^{n-1}\text{ and }p\in\Ome .$$
\end{defn}


\section{Symmetry properties of exterior domains}\label{sec:mpm}


\subsection{An important remark}

In this subsection, we give an important result which is the cornerstone of the usage of
the moving plane method in the next subsections.

In order to obtain symmetry conclusions, we must verify that the first PDE in the OEP
\eqref{1.4} is invariant under reflections of $\mathbb H^{n}$. Since
reflection generates ${\rm Iso}(\h ^n)$ by composition, it must be invariant under the
group ${\rm Iso}(\h ^n)$. Invariant means that, if $u$ is a solution to \eqref{1.4} in
$\Omega$, and $\mathscr{I} : \mathbb H ^{n } \to \mathbb H ^{n}$ an
isometry, then $v (p) = u(\mathscr{I}(p))$ is a solution to \eqref{1.4} in
$\tilde \Omega = \mathscr{I}^{{-1}}(\Omega)$.

Let $P$ be a totally geodesic hyperplane of $\mathbb{H}^{n}$ and $\mathscr{R}_{P}$  the
reflection through $P$. Let $\Omega$ be a (bounded or unbounded) connected domain in
$\mathbb{H}^{n}$. We denote by $\Omega^+$ the subset $\Omega \cap P^{+}$ (where $P^+$ is
one connected component of $\h^n\setminus P$), that we
assume to be nonempty, and denote by $\widetilde\Omega^+$ its reflection through $P$, \textit{i.e.} $
\widetilde\Omega^+ = \mathscr{R}_{P} ( \Omega^+ )$. Define a function $v(p)$ as follows
\begin{eqnarray}  \label{ARF1}
v(p)=u(\mathscr{R}(p)) \text{ for } p \in \widetilde\Omega^+  .
\end{eqnarray}

For the function $v$, we can prove the following.

\begin{lemma}[\cite{EM}] \label{lemmaARF}
The function $v(p)$ defined by (\ref{ARF1}) satisfies the first PDE in the OEP \eqref{1.4}.
\end{lemma}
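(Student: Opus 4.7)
The proof reduces to a single well-known fact: the Laplace--Beltrami operator commutes with isometries. I would state this up front and then apply it directly to the reflection $\mathscr{R}_P$.

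More precisely, the plan is as follows. First, I would record that $\mathscr{R}_P : \mathbb{H}^n \to \mathbb{H}^n$ is an isometry of the hyperbolic metric (this is built into the definition of reflection through a totally geodesic hyperplane, which the paper has already discussed in the subsection on Isometries). Second, I would invoke the standard Riemannian-geometry fact that for any isometry $\mathscr{I}$ of a Riemannian manifold $(M,g)$ and any $C^2$ function $w$ defined on an open set, one has
\begin{equation*}
\Delta (w \circ \mathscr{I}) = (\Delta w) \circ \mathscr{I}.
\end{equation*}
This is because the Laplace--Beltrami operator is constructed entirely from the metric (through the Levi--Civita connection and the trace), and isometries preserve all these data. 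If desired, one can verify it in local coordinates via the formula $\Delta w = \frac{1}{\sqrt{\det g}} \partial_i (\sqrt{\det g}\, g^{ij} \partial_j w)$ together with the change-of-variables under $\mathscr{I}$, but the coordinate-free argument is cleaner.

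Third, I would apply this to $w = u$ and $\mathscr{I} = \mathscr{R}_P$. Since $u$ solves $\Delta u + f(u) = 0$ on $\Omega$, and since $\mathscr{R}_P(\widetilde{\Omega}^+) = \Omega^+ \subset \Omega$, for every $p \in \widetilde{\Omega}^+$ I obtain
\begin{equation*}
\Delta v(p) = \Delta(u \circ \mathscr{R}_P)(p) = (\Delta u)(\mathscr{R}_P(p)) = -f(u(\mathscr{R}_P(p))) = -f(v(p)),
\end{equation*}
which is exactly the first PDE of \eqref{1.4}.

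There is no real obstacle here; the content of the lemma is simply the isometry-invariance of $\Delta$, and the only thing to be careful about is the domain of definition, namely verifying that $\mathscr{R}_P(\widetilde{\Omega}^+) = \Omega^+$ lies inside $\Omega$ so that we may legitimately use $\Delta u + f(u) = 0$ at $\mathscr{R}_P(p)$. This follows immediately from the definition $\widetilde{\Omega}^+ = \mathscr{R}_P(\Omega^+)$ together with $\mathscr{R}_P \circ \mathscr{R}_P = \mathrm{Id}$ and $\Omega^+ \subset \Omega$.
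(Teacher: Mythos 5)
Your proof is correct and is exactly the argument the paper has in mind: the paper omits the proof (citing \cite{EM}), but the surrounding discussion makes clear that the whole content of the lemma is the invariance of the Laplace--Beltrami operator under isometries of $\h^n$, applied to the reflection $\mathscr{R}_P$. Your additional care about the domain of definition, i.e.\ that $\mathscr{R}_P(\widetilde\Omega^+)=\Omega^+\subset\Omega$, is a correct and worthwhile detail.
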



\subsection{A maximum principle}

In order to apply the Moving Plane Method, we need the maximum principle at infinity given by the following lemma.

\begin{lemma}\label{lem:maxp}
Let $\Ome$ be a connected domain in $\h^n$ and $E$ be an asymptotic equator. Let $w\in
C^2(\Ome)$ be a bounded below solution of
$$
\Delta w+cw=0
$$
satisfying $\liminf w(p)\ge 0$ when $p$ converges to some point in $\partial\Ome\cup
(\partial_\infty\Ome\setminus E)$.

If $c\le 0$ in $\Ome$, then either $w\equiv 0$ or $w>0$ in $\Ome$.
\end{lemma}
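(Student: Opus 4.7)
The strategy is to first show that $w\ge 0$ on $\Omega$, and then invoke the strong maximum principle for the operator $L=\Delta+c$ with $c\le 0$ to obtain the dichotomy $w\equiv 0$ or $w>0$. The difficulty lies entirely in the absence of any lower bound on $w$ at points of $\partial_\infty\Omega\cap E$; since $E$ is codimension one inside $\partial_\infty\h^n$, one expects this to be overcome with a suitable barrier function.

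To build the barrier I would work in the Poincar\'e ball model and define
\[
\psi(p)=\int_E \Bigl(\frac{1-|p|^2}{|p-\xi|^2}\Bigr)^{n-1}d\sigma_E(\xi),
\]
where $d\sigma_E$ is the natural $(n-2)$-dimensional surface measure on the subsphere $E\subset \s^{n-1}$. Since the hyperbolic Poisson kernel $K(p,\xi)=\bigl((1-|p|^2)/|p-\xi|^2\bigr)^{n-1}$ is harmonic in $p$, differentiation under the integral gives $\Delta\psi=0$ on $\h^n$; clearly $\psi>0$. The usual decay of the Poisson kernel gives $\psi(p)\to 0$ as $p\to\xi\in\partial_\infty\h^n\setminus E$ (the integrand is uniformly small on $E$), while a standard concentration estimate, using that $E$ has dimension $n-2$ in $\s^{n-1}$, shows that $\psi(p)\to +\infty$ as $p\to\xi_0\in E$ in the ideal topology.

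With this barrier in hand, for each $\epsilon>0$ set $v_\epsilon=w+\epsilon\psi$. One computes $\Delta v_\epsilon+cv_\epsilon=\epsilon c\psi\le 0$, so $v_\epsilon$ is a bounded-below supersolution of $\Delta u+cu=0$. Moreover $\liminf v_\epsilon\ge 0$ at every point of $\partial\Omega\cup\partial_\infty\Omega$: on $\partial\Omega\cup(\partial_\infty\Omega\setminus E)$ this follows from the hypothesis on $w$ together with $\psi\ge 0$, while on $\partial_\infty\Omega\cap E$ it follows from $\psi\to+\infty$ combined with $w$ bounded below. If $\inf_\Omega v_\epsilon<0$, a minimizing sequence would, by compactness of $\overline{\h^n}$, have a subsequence converging to some point $p_*\in\overline{\Omega}^\infty$: if $p_*\in\Omega$, continuity and the strong minimum principle applied to the supersolution $v_\epsilon$ (whose infimum would be negative, hence nonpositive) would force $v_\epsilon$ to be constant, contradicting the boundary behavior; and any ideal-boundary limit contradicts $\liminf v_\epsilon\ge 0$. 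Hence $v_\epsilon\ge 0$, that is $w\ge -\epsilon\psi$; fixing $p\in\Omega$ (where $\psi(p)<\infty$) and letting $\epsilon\to 0$ gives $w(p)\ge 0$. The strong maximum principle for $L$ then concludes the proof.

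The main technical obstacle is the asymptotic analysis of $\psi$, in particular the uniform blow-up $\psi(p)\to+\infty$ for every mode of approach (non-tangential or tangential) to a point $\xi_0\in E$; this requires a careful concentration computation for the Poisson kernel combined with the codimension-one geometry of $E\subset \s^{n-1}$. Once this analytic point is settled, the rest is a routine comparison argument.
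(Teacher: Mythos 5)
Your overall scheme --- perturb $w$ by $\epsilon$ times a positive harmonic barrier blowing up along $E$, apply the maximum principle to the supersolution $w+\epsilon\psi$, and let $\epsilon\to0$ --- is exactly the paper's strategy. But the specific barrier you propose does not have the property you need, and the point you flag as ``the main technical obstacle'' is where the construction actually breaks. Your $\psi(p)=\int_E K(p,\xi)\,d\sigma_E(\xi)$ is the harmonic extension of a measure concentrated on the $(n-2)$-dimensional set $E$, which has zero surface measure in $\s^{n-1}$; such a function blows up at $\xi_0\in E$ only for non-tangential approach. Quantitatively, writing $\delta=1-|p|$ and $d=\mathrm{dist}(p,E)$, one finds $\psi(p)\approx \delta^{n-1}d^{-n}$ in the regime $\delta\le d\ll 1$; along a sequence $p_k\to\xi_0\in E$ with $d_k=\delta_k^{1/2}$ this gives $\psi(p_k)\approx \delta_k^{(n-2)/2}$, which is merely bounded for $n=2$ and tends to $0$ for $n\ge 3$, and more tangential approaches make $\psi\to 0$ for every $n\ge 2$. (For $n=2$ this is just the classical fact that the Poisson kernel $K(\cdot,\xi_0)$ tends to $0$ along tangential approaches to its pole.) Since $w$ is only assumed bounded below, $\liminf (w+\epsilon\psi)$ at points of $E$ reached tangentially can then be negative, and your comparison argument collapses precisely where the barrier is needed.

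The fix --- and what the paper does --- is to take the harmonic extension not of the singular measure $\sigma_E$ but of an \emph{integrable function on all of} $\s^{n-1}$ that is everywhere $\ge 1$ and blows up on $E$: with $E=\s^{n-1}\cap\{x_n=0\}$ the paper uses $\lambda(x)=|x_n|^{-1/2}$. Because each superlevel set $\{\lambda>M\}$ is an open neighbourhood of $E$ in $\s^{n-1}$, and the harmonic measure of a fixed open neighbourhood of $\xi_0$ tends to $1$ as $p\to\xi_0$ in any manner (the kernel $K(p,\cdot)$ tends to $0$ uniformly on the complement of that neighbourhood), the resulting harmonic function $u$ satisfies $u\ge 1$ and $u(p)\to+\infty$ for every mode of approach to $E$. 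With this barrier in place of your $\psi$, the remainder of your comparison argument goes through essentially verbatim and coincides with the paper's proof.
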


We notice that $c$ is only assumed to be a measurable function in $\Ome$.

\begin{proof}
It is enough to prove $w\ge 0$.

In the ball model, we can assume that $E=\s^{n-1}\cap\{x_n=0\}$. On $\s^{n-1}$, we define
$\lambda(x)=|x_n|^{-1/2}$ which is integrable on $\s^{n-1}$. Let $u$ be the harmonic
extension of $\lambda$ to $\h^n$. $u$ is then positive (actually $u\ge 1$) and $u(p)\to \infty$ as $p$
approaches $E$. Thus, for $t$ positive, we have $\liminf_{p\to
\partial\Ome\cup\partial_\infty\Ome}(w+t u)\ge t$. Moreover 
$$
\Delta (w+tu)=-cw\le -c(w+tu)
$$
So the maximum principle implies that $w+tu\ge 0$ on $\Ome$. As it is true for any $t>0$,
$w\ge 0$.
\end{proof}


\subsection{Moving Plane Method and symmetries of domains}

In this subsection, we apply the moving plane method to obtain a symmetry result for some
$f$-extremal domains.

When $P$ is a totally geodesic hyperplane in $\h^n$ and $\gamma$ is a geodesic such that
$\gamma(0)\in P$ and $\gamma$ is normal to $P$, we define a foliation of $\h^n$ in the
following way : let $P(t)$ be the totally geodesic hyperplane passing through $\gamma(t)$
and normal to $\gamma$. With this construction, we have the following symmetry result.

\begin{theorem}\label{th:mpm}
Assume that $U$ is an open domain in $\mathbb{H}^{n}$ (non necessarily connected), with
$C^{2}$ boundary $\Sigma$, such that $\partial _{\infty} U \subset E $,
where $E$ is an asymptotic equator at the boundary at infinity $\partial _{\infty}\mathbb
H^{n}$. Let $P$ be the totally geodesic hyperplane whose boundary at
infinity is $E$, i.e., $\partial _{\infty} P = E$ and let $\gamma$ be a geodesic normal to
$P$. Let $\{P(t)\}_{t\in\R}$ be the associated foliation.

Assume that the domain $\Omega = \h ^n \setminus \overline{U}$ is connected and the OEP
\eqref{1.4} has a solution $u\in{C}^{2}(\overline{\Omega})$, with $f$
satisfying~\eqref{hyp:h2}. Then, there is $t_0\in\R$ such that $\Omega$ is invariant by
the reflection $\boR_{P(t_0)}$ i.e., $\boR_{P(t_0)} (\Omega) =\Omega$, and $u$ is also
invariant under $\boR_{P(t_0)}$, that is, $u(p) = u(\boR_{P(t_0)} (p))$ for all $p\in
\Omega$.
\end{theorem}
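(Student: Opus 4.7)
The plan is to apply the moving plane method along the foliation $\{P(t)\}_{t\in\R}$, using Lemma~\ref{lem:maxp} in place of classical decay at infinity. Let $P(t)^+$ denote the component of $\h^n\setminus P(t)$ whose ideal closure contains $\gamma(+\infty)$, and put $\Omega_t^+ := \Omega\cap P(t)^+$ and $\widetilde\Omega_t^+ := \boR_{P(t)}(\Omega_t^+)$. For each $t$ set $u_t := u\circ \boR_{P(t)}$ on $\widetilde\Omega_t^+$; by Lemma~\ref{lemmaARF} it solves the same semilinear equation, and hypothesis (H2) yields that the difference $w_t := u_t - u$ satisfies $\Delta w_t + c_t w_t = 0$ on $\Omega\cap\widetilde\Omega_t^+$ with $c_t\le 0$ a measurable function. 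Because $\partial_\infty U\subset E$ while $\gamma(+\infty)\notin E$, there exists $T$ such that $\overline U\subset P(t)^-$ for every $t\ge T$; consequently $\Omega_t^+ = P(t)^+$ and $\widetilde\Omega_t^+ = P(t)^-$ on that range.

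Lemma~\ref{lem:maxp}, applied componentwise on $\Omega\cap P(t)^-$, then delivers $w_t\ge 0$ for every $t\ge T$. Indeed, on $\Omega\cap P(t)$ one has $w_t\equiv 0$; on $\Sigma\cap P(t)^-$ one has $u=0$ while $u_t>0$, since $\boR_{P(t)}(\Sigma\cap P(t)^-)\subset P(t)^+\subset\Omega$; and for an ideal point $q\in\partial_\infty(\Omega\cap P(t)^-)\setminus E$, the inclusion $\partial_\infty\Sigma\subset E$ forces $d(p,\Sigma)\to\infty$, so $u(p)\to C$, while likewise $u_t(p)\to C$ because $\boR_{P(t)}(q)\notin E$ (equivalently $q\notin\boR_{P(t)}(E)=\partial_\infty P(2t)\subset\partial_\infty P(t)^+$). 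Set
\[
t_0 := \inf\{\, t\in\R : w_s\ge 0 \text{ on } \Omega\cap\widetilde\Omega_s^+ \text{ for every } s\ge t \,\}\le T.
\]
Continuity gives $w_{t_0}\ge 0$ on its domain, and the strong maximum principle applied on each connected component of $\Omega\cap\widetilde\Omega_{t_0}^+$ yields the dichotomy: either $w_{t_0}\equiv 0$ on some component $\mathcal O$, or $w_{t_0}>0$ on every component. In the first alternative, an open/closed argument---openness by unique continuation for $\Delta+c_{t_0}$, closedness via $u=0$ on $\Sigma$ and $u>0$ in $\Omega$ together with the connectedness of $\Omega$---propagates the identity $u = u\circ\boR_{P(t_0)}$ from $\mathcal O$ to all of $\Omega$, giving $\boR_{P(t_0)}(\Omega)=\Omega$ and the symmetry of $u$.

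The hard part is excluding the strict case $w_{t_0}>0$. As $t$ decreases past the first value at which $P(t)$ meets $\Sigma$, the shape of $\widetilde\Omega_t^+$ becomes intricate, and at the critical value $t_0$ one must locate a touching point $p_0$: either an interior point of $\Omega\cap P(t_0)^-$ where $\boR_{P(t_0)}(\Sigma)$ is tangent to $\Sigma$, or a point of $\Sigma\cap P(t_0)$ at which $\Sigma$ meets $P(t_0)$ orthogonally. At such a $p_0$ both $u$ and $u_{t_0}$ vanish, so the Hopf boundary-point lemma (respectively Serrin's corner lemma in the orthogonal case) forces $\partial_\nu w_{t_0}(p_0)\neq 0$ unless $w_{t_0}\equiv 0$; the overdetermined Neumann condition $\partial_\nu u\equiv\alpha$ on $\Sigma$, reflected through $P(t_0)$, however imposes $\partial_\nu w_{t_0}(p_0)=0$. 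The resulting contradiction forces the first alternative of the dichotomy and thus the conclusion. A secondary subtlety, appearing only when $t_0<0$, is that the second exceptional equator $\boR_{P(t)}(E)=\partial_\infty P(2t)$ then sits inside $\partial_\infty P(t)^-$; one handles it by a mild extension of Lemma~\ref{lem:maxp} allowing two equators in the exceptional set, obtained by harmonically extending the sum of two singular barriers of the form $|\cdot|^{-1/2}$, one built for each equator.
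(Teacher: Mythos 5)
Your overall strategy is the paper's: slide the hyperplanes $P(t)$, compare $u$ with its reflection using~\eqref{hyp:h2} and the maximum principle at infinity (Lemma~\ref{lem:maxp}), and close with Hopf's lemma or Serrin's corner lemma at a touching point. However, two genuine gaps remain.

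First, your boundary analysis omits the one piece of $\partial(\Omega\cap\widetilde\Omega_t^+)$ on which the sign goes the wrong way: on $\boR_{P(t)}(\Sigma)\cap\Omega\cap P(t)^-$ one has $u_t=0$ while $u>0$, so $w_t=u_t-u<0$ there, and Lemma~\ref{lem:maxp} cannot deliver $w_t\ge 0$. This piece is empty exactly when the geometric inclusion $\boR_{P(t)}\bigl(U\cap P(t)^+\bigr)\subset U\cap P(t)^-$ holds; for $t\ge T$ it holds vacuously, which is why your argument works on that (trivial) range, but the whole difficulty of the method lies below $T$. The paper avoids the issue by taking the inclusion $\widetilde\Omega_t^+\subset\Omega_t^-$ as the standing hypothesis of its ``Fact'' and working on $\widetilde\Omega_t^+$ itself, where the reflected boundary carries the value $v_t=0\le u$. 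Your purely analytic definition $t_0=\inf\{t:\ w_s\ge0\ \forall s\ge t\}$ does not substitute for tracking this inclusion: without it you can justify neither that $w_t\ge0$ persists below $T$, nor that at $t_0$ there is an interior tangency or an orthogonal crossing of $\Sigma$ with $P(t_0)$ --- the existence of the touching point is asserted, not derived. (Relatedly, the justification ``$u_t>0$ on $\Sigma\cap P(t)^-$ since $P(t)^+\subset\Omega$'' is only valid for $t\ge T$.)

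Second, you are missing the paper's case (C): the plane may reach the symmetric position $t=0$ with the inclusion still strict and with no tangency and no orthogonality anywhere, in which case Hopf and Serrin give nothing. This is not a marginal scenario: when $\partial_\infty U\neq\emptyset$ the sweep is forced to stop at $t=0$, since for $t$ past $0$ the equator $\boR_{P(t)}(E)=\partial_\infty P(2t)$ separates from $E\supset\partial_\infty U$ and the configuration degenerates at infinity. The paper resolves this by running the moving plane from the opposite end of $\gamma$ as well and intersecting the two resulting inclusions at $t=0$ to force $\widetilde\Omega_0^+=\Omega_0^-$; your one-sided argument cannot conclude here. By contrast, your worry about needing a two-equator version of Lemma~\ref{lem:maxp} is unfounded: for $t\neq 0$ exactly one of $E$ and $\boR_{P(t)}(E)$ lies in the hemisphere $\partial_\infty P(t)^-$ containing the domain of $w_t$ (and at $t=0$ they coincide), so the lemma applies as stated with a single exceptional equator.
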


We first remark that if $\partial_\infty U\neq \emptyset$, $t_0$ is necessarily $0$. The
second remark is that, since $\Ome$ is connected and $\partial_\infty U\subset E$,
$\partial_\infty \Ome=\partial_\infty \h^n$.

\begin{proof}
For $t\in\R$, we denote by $\boR_t$ the reflection through $P(t)$. We also denote by
$P^-(t)$ (resp. $P^+(t)$) the open halfspace bounded by $P(t)$ that contains
$\{\gamma(s),s<t\}$ (resp. $\{\gamma(s),s>t\}$). We then introduce $U_t^-=P^-(t)\cap U$, $U_t^+=P^+(t)\cap U$,
$\Ome_t^-=P^-(t)\cap \Ome$ and $\Ome_t^+=P^+(t)\cap \Ome$. We also define
$\widetilde\Ome_t^+=\boR_t(\Ome_t^+)\subset P^-(t)$ and $\widetilde U_t^-=\boR_t(U_t^-)$
(see Figure~\ref{fig:fig4}).

On $\widetilde\Ome_t^+$, the function $v_t=u\circ \boR_t$ is defined and solves the PDE in
\eqref{1.4}. The first important fact is the following.

\begin{figure}
\begin{center}
\resizebox{0.6\linewidth}{!}{\input{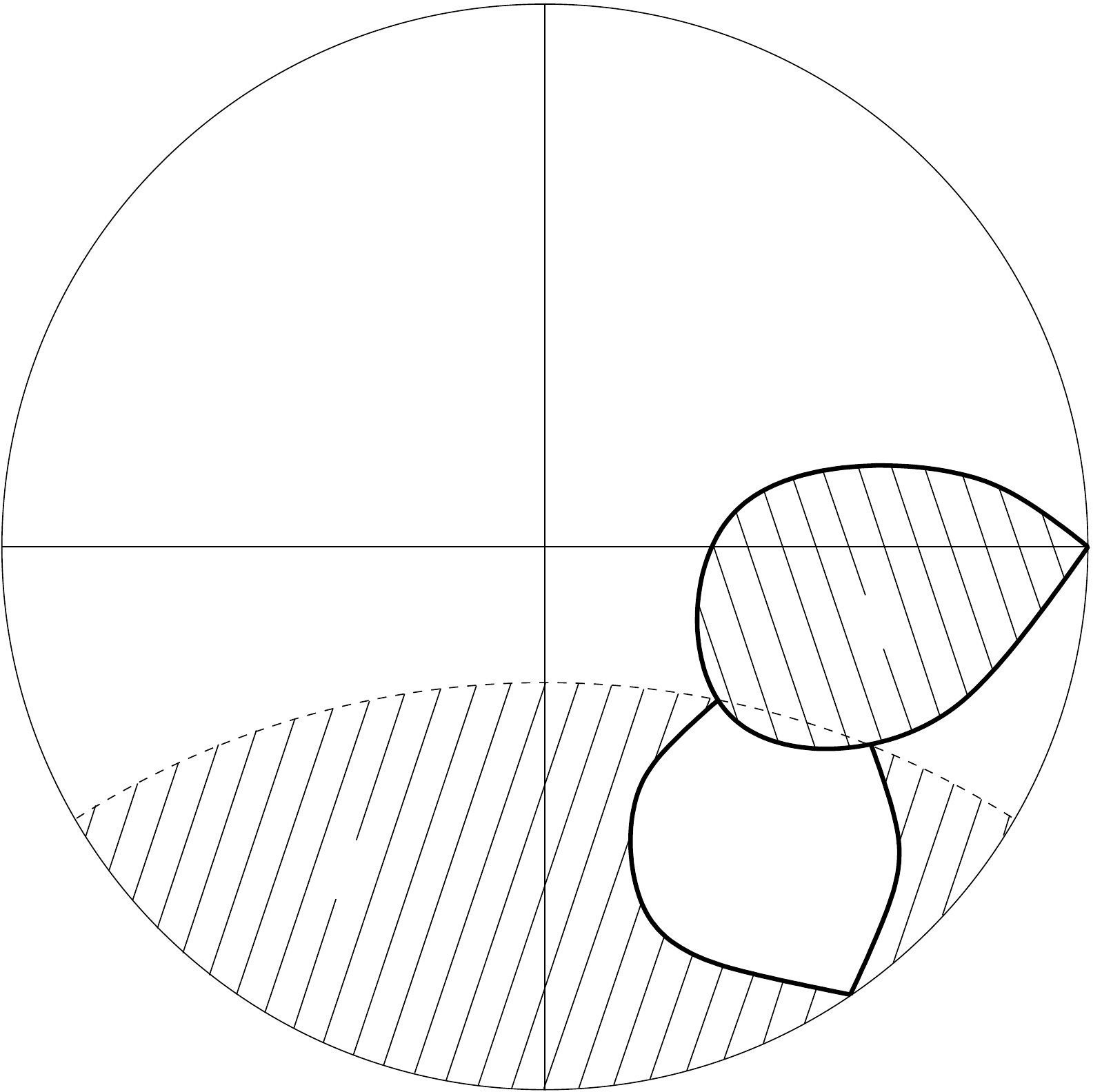_t}}
\end{center}
\caption{The domain $\widetilde\Ome_t^+$}\label{fig:fig4}
\end{figure}

\begin{fact}
Let $t$ be non positive. If $\widetilde\Ome_t^+\subset \Ome_t^-$, then $v_t\le u$ on
$\widetilde\Ome_t^+$. Moreover if $v_t(p)=u(p)$ at some point $p\in\widetilde\Ome_t^+$
then $\widetilde\Ome_t^+=\Ome_t^-$ ($\Ome$ is symmetric with respect to $P_t$) and $v_t=u$
on $\widetilde\Ome_t^+=\Ome_t^-$.
\end{fact}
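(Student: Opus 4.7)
The plan is to apply the maximum principle Lemma~\ref{lem:maxp} to the difference $w := u - v_t$ on $\widetilde\Omega_t^+$. By Lemma~\ref{lemmaARF}, both $u$ and $v_t$ solve $\Delta(\cdot) + f(\cdot) = 0$, so subtraction yields $\Delta w + c w = 0$, where $c(p) := -[f(u(p)) - f(v_t(p))]/[u(p) - v_t(p)]$ on $\{u \neq v_t\}$ and $c(p) := 0$ elsewhere. Hypothesis~\eqref{hyp:h1} makes $c$ bounded and measurable, while \eqref{hyp:h2} guarantees $c \le 0$. Since $u$ is bounded, so is $w$.

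Next I would verify the boundary hypothesis. The topological boundary $\partial\widetilde\Omega_t^+$ splits into the part inside $P(t)$, where $\boR_t$ fixes each point so $v_t = u$ and $w = 0$, and the part inside $\boR_t(\partial\Omega \cap \overline{P^+(t)})$, where $v_t = 0$ while the inclusion $\widetilde\Omega_t^+ \subset \Omega_t^-$ places these points in $\overline\Omega$, giving $u \ge 0$ and hence $w \ge 0$. For the ideal boundary, if $p \to x \in \partial_\infty\h^n$ with $x \notin E$ then the assumption $\partial_\infty\partial\Omega \subset E$ forces $d(p,\partial\Omega) \to +\infty$, so $u(p) \to C$ by the uniform-limit hypothesis in \eqref{1.4}; analogously, if $x \notin \boR_t(E)$ then $\boR_t(p) \to \boR_t(x) \notin E$ and $v_t(p) = u(\boR_t(p)) \to C$. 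Thus $w(p) \to 0$ as $p$ tends to any $x \in \partial_\infty\widetilde\Omega_t^+ \setminus (E \cup \boR_t(E))$.

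The chief technical obstacle is that the exceptional set at infinity is a union of \emph{two} asymptotic equators, while Lemma~\ref{lem:maxp} allows only one. Its proof extends immediately, however: I would replace the single barrier by the harmonic extension of the sum of the two positive integrable densities analogous to the one used there, one blowing up on $E$ and the other on $\boR_t(E)$; adding a small positive multiple of this combined barrier to $w$ and letting the multiple tend to $0$ yields $w \ge 0$ throughout $\widetilde\Omega_t^+$ (applied to each connected component). For the rigidity statement, if $w(p_0) = 0$ for some $p_0 \in \widetilde\Omega_t^+$ then the strong maximum principle applied to the nonnegative solution $w$ of $\Delta w + cw = 0$ with $c \le 0$ forces $w \equiv 0$ on the component $C_0$ of $\widetilde\Omega_t^+$ containing $p_0$. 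If $C_0 \subsetneq \Omega_t^-$, one would find $q \in \partial C_0 \subset \partial\widetilde\Omega_t^+$ lying in the open set $\Omega_t^-$; then $u(q) > 0$ since $q \in \Omega$, while $q$ belongs to the reflected image of $\partial\Omega$, giving $v_t(q) = 0$, contradicting the continuity of $w \equiv 0$ up to $\partial C_0$. Hence $C_0 = \Omega_t^-$, and repeating the same argument on the remaining components identifies $\widetilde\Omega_t^+$ with $\Omega_t^-$ together with $v_t \equiv u$.
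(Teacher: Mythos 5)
Your argument is correct and follows the same skeleton as the paper's: set $w=u-v_t$, linearize to $\Delta w+cw=0$ with $c\le 0$ via~\eqref{hyp:h2}, check the sign of $w$ on the finite boundary, control $w$ at infinity away from an exceptional set, apply the maximum principle at infinity, and use the strong maximum principle for the equality case. Three comments. (i) A sign slip: since $\Delta w=-(f(u)-f(v_t))$, the correct coefficient is $c=(f(u)-f(v_t))/(u-v_t)$ with no leading minus; your formula as written gives $c\ge 0$, though your subsequent assertion that \eqref{hyp:h2} yields $c\le 0$ shows the intent is right. (ii) The ``chief technical obstacle'' you identify is where you genuinely diverge from the paper, and it is in fact a non-issue. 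The paper splits the ideal points according to whether $d(p_n,\boR_t(\partial\Ome))\to+\infty$ or stays bounded, and observes that the first alternative already forces $d(p_n,\partial\Ome)\to+\infty$ as well, so that both $u(p_n)\to C$ and $v_t(p_n)\to C$; the second alternative can only occur at points of $\boR_t(E)$, so Lemma~\ref{lem:maxp} applies verbatim with the single equator $\boR_t(E)$. Equivalently, for $t<0$ the ideal closure of $\widetilde\Ome_t^+\subset P^-(t)$ is disjoint from $E$, and for $t=0$ one has $E=\boR_0(E)$, so your union $E\cup\boR_t(E)$ never contributes more than one equator to $\partial_\infty\widetilde\Ome_t^+$. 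Your two-barrier extension of Lemma~\ref{lem:maxp} is nonetheless valid (the barrier construction is isometry-equivariant and sums of the two positive harmonic barriers still work), just unnecessary. (iii) Your rigidity argument supplies details the paper dismisses with ``the equality case follows easily''; it is essentially the standard one, but the step producing $q\in\partial C_0\cap\Ome_t^-$ tacitly assumes that the component of $\Ome_t^-$ containing $C_0$ strictly contains it and hence meets $\partial C_0$; since connectedness of $\Ome_t^-$ does not follow from that of $\Ome$, the cleaner route is an open-and-closed argument in the connected set $\Ome$ for the locus where $u=u\circ\boR_t$.
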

So, let us assume $\widetilde\Ome_t^+\subset \Ome_t^-$ and  let $w_t$ be $u-v_t$ on
$\widetilde\Ome_t^+$. $v_t$ satisfies the following conditions
$$
\begin{cases}
\Delta v_t+f(v_t)=0&\textrm{in } \widetilde\Ome_t^+,\\
v_t(p)=u_t(p)& \textrm{if } p\in \partial\widetilde\Ome_t^+\cap P(t),\\
v_t(p)=0& \textrm{if } p\in \partial\widetilde\Ome_t^+\cap P^-(t),\\
\langle\nabla v_t,\vec\nu\rangle=\alpha &\textrm{on }\partial\widetilde\Ome_t^+\cap P^-(t)
\end{cases}
$$
where $\partial\widetilde\Ome_t^+\cap P^-(t)$ is included in $\boR_t(\partial\Ome)$.

As a consequence, the function $w_t$ solves the PDE
$$
\Delta w_t+cw_t=0
$$
where $c$ is defined by
$$
c(p)=\begin{cases}
-1&\textrm{if } w_t(p)=0\\
\frac{f(u(p))-f(v_t(p))}{u(p)-v_t(p)}&\textrm{if } w_t(p)\neq 0
\end{cases}
$$
Since $f$ is non increasing, $c$ is a non positive function. 

Since $u$ is bounded, $w_t$ is bounded too. Let $(p_n)$ be a sequence of points that
converges to some point
$q\in\partial\widetilde\Ome_t^+\cup\partial_\infty\widetilde\Ome_t^+$. Let us study the behaviour of the sequence
$(w_t(p_n))_{n\in \mathbb N}$.

First $q$ could be in $\partial\widetilde\Ome_t^+$, since $u\ge 0$ in $\overline\Ome$, we
have $\lim w_t(p_n)=w_t(q)\ge 0$ ($w_t(q)=0$ on $\partial\widetilde\Ome_t^+\cap P(t)$ and
$w_t(q)\ge 0$ on $\partial\widetilde\Ome_t^+\cap P^-(t)$). Let us assume now that
$q\in\partial_\infty\widetilde\Ome_t^+$. The first case is $\lim
d(p_n,\boR_t(\partial\Ome))=+\infty$, this implies that $\lim d(p_n,\partial\Ome)=+\infty$
and $\lim d(\boR_t(p_n),\partial\Ome)=+\infty$; thus, $u$ having a limit far from the boundary, $\lim
w_t(p_n)=0$. The last possibility is $d(p_n,\boR_t(\partial\Ome))$ stays bounded. This
case can only appear if $q\in \boR_t(E)$ so it does not matter in order to apply Lemma~\ref{lem:maxp}.
Hence $w_t$ satisfies to the hypotheses of Lemma~\ref{lem:maxp} and $w_t\ge 0$ : $v_t\le u$.

The equality case follows easily. This finishes the proof of the above fact.

We are now ready to apply the moving plane method. First since $\partial_\infty U\subset
E$, there is $T\le 0$ such that for any $t\le T$, $P^-(t)\subset \Ome$ and let $t_1$ be
the largest non positive number $T$ such that this property is true. We assume for the
moment that $t_1<0$.

Since $\partial_\infty U\subset E$, for any $t<0$, $\overline{P^-(t)}\cap \overline U$ is
compact. This implies that $P(t_1)$ is tangent to $\partial\Ome$. Since $\partial\Ome$ is
$C^2$, there exists $\epsilon>0$ such that $\widetilde U_t^-\subset U_t^+$ (or $\widetilde
\Ome_t^+\subset \Ome_t^-$) and $\partial \Ome$ is not orthogonal to $P(t)$ for $t\in(t_1,t_1+\epsilon)$. Looking at the first non positive time where such properties stop to be true, one of the following situations will happen:

\begin{itemize}
\item[(A)] There exists $\bar t \in (t_{1 } ,0)$ such that $\widetilde \Omega _{\bar
t}^{+}$ is internally tangent to the boundary of $\Omega _{\bar t}^{-} $ at some point
$\bar p$ not in $P(\bar t)$ and $\widetilde \Omega _{ t}^{+} \subset \Omega _{ t}^{+}$ for
all $t\in (-\infty,\bar t]$.

\item[(B)] There exists $\bar t \in (t_{1 } ,0)$ such that $P(\bar t)$ arrives at a
position where $\partial\Ome$ is orthogonal to $P(\bar t)$ at $\bar p \in \partial\Ome\cap
P(\bar t)$ and $\widetilde \Omega _{ t}^{+} \subset \Omega _{ t}^{+}$ for all $t\in
(-\infty,\bar t]$.

\item[(C)] $\widetilde \Omega _{ t}^{+} \subset \Omega _{ t}^{+}$ for all $t\in
(-\infty,0]$ (this case corresponds also to the case $t_1=0$).
\end{itemize}

If (A) occurs, the function $w_{\bar t}$ is non negative by the above fact and, at $\bar p$,
we have $w_{\bar t}(\bar p)=0$ and $\partial_\nu w_{\bar t}(\bar p)=0$ (this second
property comes from the fact that $v_{\bar t}$ has the same Neumann data as $u$ on
$\partial\widetilde\Ome_t^+\cap P^-(t)$). So applying the Hopf boundary maximum principle
to $w_{\bar t}$ yields 
$u-v_{\bar t}\equiv 0$ in $\widetilde \Ome_t^+$, which implies that $\widetilde\Ome_{\bar
t}^+=\Ome_{\bar t}^-$ so $\boR_{\bar t}(\Ome)=\Ome$.

If (B) occurs, the point $\bar p$ is then a right angle corner point of $\partial
\widetilde\Ome_{\bar t}^+$ so the boundary maximum principle cannot be applied directly
(the requisite of the interior tangent ball is not available). Nevertheless, we can apply
Serrin's Corner Lemma to obtain that $P(\bar t)$
must be a hyperplane of symmetry. Serrin's Corner Lemma appears first as Lemma~2 in
\cite{s}, however this version is not sufficient in our situation; so we use an improved
version as in the proof of \cite[Theorem 8.3.2, p. 145]{ps} (see also the discussion in
\cite[Appendix to Section~8.3, p. 149-151]{ps}). We also refer to \cite{EM,mr} for a
detailed exposition in the hyperbolic setting. 

So we are let with case (C) where we get $\widetilde\Ome_0^+\subset \Ome_0^-$. Then we can
do the same argument as above but looking at $t>0$ and exchanging the role played by
$\Ome_t^+$ and $\Ome_t^-$. This will give us either that $\Ome$ is symmetric with respect
to some $P(\bar t)$ for $\bar t>0$ or $\boR_0(\Ome_0^-)\subset \Ome_0^+$. This last
inclusion with $\widetilde\Ome_0^+\subset \Ome_0^-$ gives us
$\widetilde\Ome_0^+=\Ome_0^-$: $\Ome$ is symmetric with respect to $P(0)$.

The symmetry of the function $u$ then comes by applying the above fact.
\end{proof}


\subsection{Applications}

Now, we can apply Theorem \ref{th:mpm} to obtain classification results of some
$f$-extremal domains defined as exterior domains.

\begin{theorem} \label{CorBounded}
Assume that $U$ is a bounded open domain in $\mathbb{H}^{n}$, with $C^{2}$ boundary, $
\Omega = \h^n \setminus \overline{U} $ is connected and on which the OEP \eqref{1.4} has a
solution $u\in{C}^{2}(\overline{\Omega})$ with $f$ satisfying~\eqref{hyp:h2}. Then
$U$ must be a geodesic ball and $u$ is radially symmetric.
\end{theorem}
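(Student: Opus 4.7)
My plan is to apply Theorem~\ref{th:mpm} in every direction and combine the resulting symmetries to force $U$ to be a geodesic ball. The first observation is that since $U$ is bounded, $\partial_\infty U = \emptyset$, which is trivially contained in every asymptotic equator $E$. Thus Theorem~\ref{th:mpm} applies to the foliation $\{P(t)\}_{t\in\R}$ associated with any geodesic $\gamma$, and provides for each such $\gamma$ a totally geodesic hyperplane $P_\gamma$ orthogonal to $\gamma$ such that both $\Omega$ (equivalently $U$) and $u$ are invariant under the reflection $\boR_{P_\gamma}$.

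Next I would show that $P_\gamma$ is uniquely determined by $\gamma$. Indeed, if $P$ and $P'$ were two distinct symmetry hyperplanes both orthogonal to $\gamma$, then $\boR_{P} \circ \boR_{P'}$ would be a nontrivial hyperbolic translation along $\gamma$, whose orbits in $\h^n$ are unbounded, contradicting the invariance of the bounded set $U$. Let $G \subset \mathrm{Iso}(\h^n)$ denote the group generated by all the reflections $\boR_{P_\gamma}$. Then $G$ preserves $U$, so every $G$-orbit of a point of $U$ remains inside the bounded set $U$. Cartan's fixed point theorem on the Hadamard manifold $\h^n$ (applied to the Chebyshev center of such a bounded orbit) therefore produces a common fixed point $p_0 \in \h^n$ for $G$. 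Since $p_0$ is fixed by each $\boR_{P_\gamma}$, it must lie on $P_\gamma$ for every geodesic $\gamma$.

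The final step will use the freedom to choose $\gamma$. For any unit vector $v \in T_{p_0}\h^n$, let $\gamma_v$ be the geodesic through $p_0$ with direction $v$; then $P_{\gamma_v}$ is forced to be the unique hyperplane passing through $p_0$ and orthogonal to $v$. Reflections through all such hyperplanes generate the full isotropy subgroup $O(n)$ at $p_0$, so $U$ is $O(n)$-invariant with center $p_0$, hence a union of geodesic spheres centered at $p_0$. The connectedness of $\Omega$ rules out any annular configuration, leaving $U$ a single geodesic ball, and $u$ inherits the radial symmetry from its invariance under each $\boR_{P_\gamma}$. The only nontrivial step will be producing the common fixed point $p_0$; once that is available, the classification is an immediate consequence of Theorem~\ref{th:mpm} combined with the uniqueness of $P_\gamma$.
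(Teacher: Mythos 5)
Your proof is correct and follows the route the paper intends: Theorem~\ref{CorBounded} is presented there as a direct application of Theorem~\ref{th:mpm}, with the details left to the reader. Your completion of those details is sound --- applying Theorem~\ref{th:mpm} in every direction (legitimate since $\partial_\infty U=\emptyset$ lies in every asymptotic equator), observing that the symmetry hyperplane orthogonal to a given geodesic is unique because a nontrivial hyperbolic translation cannot preserve the bounded nonempty set $U$, producing the common center $p_0$ by the circumcenter (Cartan fixed point) argument so that the reflections generate the full isotropy group at $p_0$, and finally using the connectedness of $\Omega$ to exclude the annular case.
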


This theorem is similar to Theorem~1 in \cite{Rei} by Reichel.

\begin{remark}\label{rk:1}
In the above theorem, the function $u$ is then a function of the distance $s$ to some
point $p_0\in\h^n$. The PDE in~\eqref{1.4} can then be written in term of the variable $s$
as the ODE:
$$
\partial_s^2u+(n-1)\mathrm{cotanh}(s)\partial_s u+f(u)=0.
$$
\end{remark}

Next, we will classify exterior domains $\Omega=\h^n\setminus \overline U$ when
$\partial_\infty U $ has only one point.

\begin{theorem}\label{OnePoint}
Assume that $U$ is a domain in $\mathbb{H}^{n}$, with $C^2$ boundary and whose asymptotic
boundary is a point $x_{0} \in \partial _\infty \mathbb H ^n$. Assume $\Omega = \h ^n
\setminus \overline{U}$ is connected and on which the OEP \eqref{1.4} has a solution
$u\in{C}^{2}(\overline{\Omega})$  with $f$ satisfying~\eqref{hyp:h2}.

Then, $\Omega$ is the exterior of a horoball at $x_0$ and $u$ is horospherically
symmetric.
\end{theorem}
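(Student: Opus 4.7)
The plan is to apply Theorem~\ref{th:mpm} to every asymptotic equator $E$ passing through $x_0$, producing a mirror symmetry of $\Omega$ through every totally geodesic hyperplane $P$ with $x_0\in\partial_\infty P$; the decomposition \eqref{Eq:TransParabolic} of parabolic translations will then upgrade these reflective symmetries to full horospherical invariance, and a short analysis in the halfspace model will identify $\Omega$ as the exterior of a horoball.

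First, fix any asymptotic equator $E\ni x_0$ and apply Theorem~\ref{th:mpm}: the hypothesis $\partial_\infty U=\{x_0\}\subset E$ holds, so the theorem produces $t_0\in\R$ such that $\boR_{P(t_0)}(\Omega)=\Omega$ and $u\circ \boR_{P(t_0)}=u$, where $P(0)$ is the totally geodesic hyperplane with $\partial_\infty P(0)=E$ and $\{P(t)\}_{t\in\R}$ is the associated foliation along some geodesic $\gamma$ normal to $P(0)$. The conformal extension of $\boR_{P(t_0)}$ to $\partial_\infty \h^n$ fixes a point exactly when it lies in $\partial_\infty P(t_0)$, so the invariance of $\partial_\infty U=\{x_0\}$ forces $x_0\in\partial_\infty P(t_0)$. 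A short computation in the Poincar\'e ball model (placing $P(0)$ as the equatorial disk and $\gamma$ as the vertical diameter) shows that $\partial_\infty P(t)$ is a latitude hypersphere of $\s^{n-1}$ at a nonzero height for $t\neq 0$, so $x_0\in E\cap\partial_\infty P(t)$ forces $t=0$ (this is the first remark following Theorem~\ref{th:mpm}). Letting $E$ range over all asymptotic equators through $x_0$, we conclude that $\Omega$ and $u$ are invariant under $\boR_P$ for every totally geodesic hyperplane $P$ with $x_0\in\partial_\infty P$.

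Second, we upgrade to parabolic invariance using \eqref{Eq:TransParabolic}: for every $v\in\R^{n-1}$ there exist totally geodesic hyperplanes $P_1,P_2$ with $\partial_\infty P_1\cap\partial_\infty P_2=\{x_0\}$ and $\mathcal T^{x_0}_v=\boR_{P_1}\circ\boR_{P_2}$. Both $P_i$ have $x_0$ in their asymptotic boundaries, so by the previous step both reflections $\boR_{P_i}$ preserve $\Omega$ and $u$, and hence so does their composition $\mathcal T^{x_0}_v$. Thus $\Omega$ is horospherically symmetric at $x_0$ and $u$ is a horospherically symmetric function.

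Finally, working in the halfspace model sending $x_0$ to $\infty$, horospheres at $x_0$ are the horizontal hyperplanes $\{x_n=c\}$ and parabolic translations at $x_0$ are horizontal Euclidean translations, so horospherical invariance together with openness and connectedness of $\Omega$ force $\Omega=\{a<x_n<b\}$ for some $0\le a<b\le +\infty$. Among these, the only choice compatible with $U=\h^n\setminus\overline\Omega$ being a connected domain with $\partial_\infty U=\{x_0\}=\{\infty\}$ is $a=0$ and $b<+\infty$, yielding $U=\{x_n\ge b\}$ (a horoball at $x_0$) and $\Omega$ its exterior. The only nontrivial point in the whole argument is the verification that $t_0=0$ in the first step; once this is accepted, the theorem follows at once from Theorem~\ref{th:mpm} and the parabolic decomposition~\eqref{Eq:TransParabolic}.
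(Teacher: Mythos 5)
Your proof is correct and follows essentially the same route as the paper: you apply Theorem~\ref{th:mpm} to the totally geodesic hyperplanes through $x_0$ (with $t_0=0$ forced by $\partial_\infty U\neq\emptyset$, exactly the remark after that theorem) and then upgrade to parabolic invariance via the decomposition~\eqref{Eq:TransParabolic}. The only cosmetic difference is the final identification of $U$ as a horoball, which you carry out by an explicit halfspace-model computation (note $U=\{x_n>b\}$ rather than $\{x_n\ge b\}$, since $U$ is open) where the paper argues that each component of $\partial\Omega$ is a horosphere; both are fine.
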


This result can be compare to Theorem~C in~\cite{SETou} by Sa Earp and Toubiana. We can
also think to Theorem~A in~\cite{dCL} by do Carmo and Lawson about the geometry of
constant mean curvature hypersurfaces in $\h^n$.

\begin{proof}
Let $P$ be a totally geodesic hyperplane such that $x_0\in\partial_\infty P$ and apply
Theorem~\ref{th:mpm} with $E=\partial_\infty P$. Since $\partial_\infty U\neq \emptyset$,
the remark below Theorem~\ref{th:mpm} implies that $\Ome$ and $u$ are symmetric with
respect to $P$.

Let $\mathcal T^{x_0}_v$ be a parabolic translation based at $x_0$ and $P_1$ and $P_2$ be
two totally geodesic hyperplanes such that $\partial_\infty P_1\cap \partial_\infty
P_2=\{x_0\}$ and $\mathcal T^{x_0}=\boR_{P_1}\circ\boR_{P_2}$. Since $\Ome$ and $u$ are
invariant by $\boR_{P_1}$ and $\boR_{P_2}$ they are invariant by $\mathcal T^{x_0}_v$.

Thus if $p\in\partial\Ome$, $\mathcal T_v^{x_0}(p)\in\partial\Ome$ for any $v\in\R^{n-1}$
and each
connected component of $\partial\Ome$ is a horosphere based at $x_0$. Now since $\Ome$ is
connected and $\partial_\infty U=\{x_0\}$, $U$ is a horoball. Finally $u$ is
horospherically symmetric (see \cite{dCL} for similar arguments).
\end{proof}

\begin{remark}\label{rk:2}
In the above result, the function $u$ is then a function of the distance $s$ to some
horospheres in $\h^n$. The PDE in~\eqref{1.4} can then be written in term of the variable
$s$ as the ODE:
$$
\partial_s^2u-(n-1)\partial_s u+f(u)=0
$$
\end{remark}

Also, another consequence of Theorem \ref{th:mpm} and Definition \ref{Def:AxialSymmetric}
is the following:

\begin{theorem}\label{TwoPoints}
Assume that $U$ is a domain in $\mathbb{H}^{n}$, with boundary a $C^2$
hypersurface $\Sigma$ and whose asymptotic boundary consists in two distinct points $x, y
\in \mathbb S ^{n-1}$, $x\neq y$. 

Assume $\Omega = \h ^n \setminus \overline{U}$ is connected and on which the OEP
\eqref{1.4} has a solution $u\in{C}^{2}(\overline{\Omega})$ with $f$
satisfying~\eqref{hyp:h2}. Then $\Omega$
is rotationally symmetric with respect to the axis given by the complete geodesic
$\beta $ whose boundary at infinity is $\{ x,y\}$, i.e., $\beta(\infty)= x$ and $\beta(-\infty) =
y$. In other words, $\Omega$ is invariant by the group of rotations in
$\mathbb H^n$ fixing $\beta$. Moreover, $u$ is axially symmetric w.r.t. $\beta$.
\end{theorem}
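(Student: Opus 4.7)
The plan is to reduce the statement directly to Theorem~\ref{th:mpm} applied to a whole family of asymptotic equators, and then to use the fact (recalled in Section~\ref{sec:hypgeo}) that the group of rotations around a geodesic is generated by reflections through totally geodesic hyperplanes containing that geodesic.

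First I would pick an arbitrary totally geodesic hyperplane $P\subset\h^n$ containing the geodesic $\beta=(yx)$. Since $\beta$ has end-points $x$ and $y$, we have $\{x,y\}\subset\partial_\infty P$, hence $\partial_\infty U=\{x,y\}\subset E$, where $E=\partial_\infty P$ is an asymptotic equator. All hypotheses of Theorem~\ref{th:mpm} are then fulfilled for this $E$: $U$ has $C^2$ boundary, $\Omega=\h^n\setminus\overline{U}$ is connected, and the OEP~\eqref{1.4} admits a solution $u\in C^2(\overline\Omega)$ with $f$ satisfying~\eqref{hyp:h2}. Theorem~\ref{th:mpm} then produces some $t_0\in\R$ such that $\Omega$ is invariant by the reflection $\boR_{P(t_0)}$ and $u$ is invariant under $\boR_{P(t_0)}$. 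By the remark immediately following Theorem~\ref{th:mpm}, the fact that $\partial_\infty U\neq\emptyset$ forces $t_0=0$, so in fact $\boR_P(\Omega)=\Omega$ and $u\circ\boR_P=u$.

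Next, since the choice of $P$ among totally geodesic hyperplanes containing $\beta$ was arbitrary, I obtain that $\Omega$ and $u$ are invariant under $\boR_P$ for \emph{every} such hyperplane $P$. Recall from Section~\ref{sec:hypgeo} that every rotation $\mathscr{R}_\theta^\beta$ around $\beta$ is the composition of an even number of reflections through hyperplanes containing $\beta$. Combining these two facts gives that $\mathscr{R}_\theta^\beta(\Omega)=\Omega$ and $u\circ\mathscr{R}_\theta^\beta=u$ for every $\theta\in SO_{n-1}(\R)$, which is precisely axial symmetry of $\Omega$ and $u$ with respect to $\beta$ in the sense of Definition~\ref{Def:AxialSymmetric}. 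For $n=2$ the geodesic $\beta$ is itself a totally geodesic hyperplane, so the single reflection obtained from the first step is exactly the axial symmetry required by Definition~\ref{Def:AxialSymmetric} in dimension two.

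There is essentially no serious obstacle here once Theorem~\ref{th:mpm} has been established: the work is in verifying that the family of hyperplanes containing $\beta$ gives one applicable instance of Theorem~\ref{th:mpm} for each direction transverse to $\beta$, and in quoting the generation of rotations by reflections. The only small point requiring care is ensuring that the symmetry plane produced by Theorem~\ref{th:mpm} is $P$ itself and not a parallel $P(t_0)$, which is handled by the non-emptiness of $\partial_\infty U$ and the remark following that theorem.
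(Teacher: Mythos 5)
Your proposal is correct and follows exactly the route the paper intends: the paper states Theorem~\ref{TwoPoints} as ``another consequence of Theorem~\ref{th:mpm} and Definition~\ref{Def:AxialSymmetric}'' without writing out details, and your argument (apply Theorem~\ref{th:mpm} to every totally geodesic hyperplane containing $\beta$, use the remark that $\partial_\infty U\neq\emptyset$ forces $t_0=0$, then generate the rotations around $\beta$ by such reflections) is the natural filling-in of that sketch, parallel to the written proof of Theorem~\ref{OnePoint}.
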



\section{Invariance of $f$-extremal domains}\label{sec:inv}

When $E$ is an asymptotic equator in $\partial_\infty \h^n$, the closure (in
$\partial_\infty \h^n$) of each connected component of $\partial_\infty\h^n\setminus E$ is
called an asymptotic hemisphere associated to $E$. If one asymptotic hemisphere $C$ is chosen
and $P$ is the totally geodesic hyperplane with $\partial_\infty P=E$, we consider the
equidistant hypersurfaces $P_c$ to $P$ (see Section~\ref{sec:submanifd} for the
definition) and $P_c^+$ the connected component of $\h^n\setminus P_c$ with
$C=\partial_\infty P_c^+$.

The next result mainly says that a $f$-extremal domain $\Ome$ such that $\partial_\infty\Ome$ is an asymptotic hemisphere is translating invariant with respect to some totally geodesic hyperplane.

\begin{theorem}\label{Th:Graphical}
Let $E$ be an asymptotic equator and $C$ an asymptotic hemisphere associated to $E$; we
also denote by $P$ the totally geodesic hyperplane with $\partial_\infty P=E$. Let $\Ome$
be a connected domain with $C^2$ boundary $\Sigma$ such that $\partial_\infty \Sigma=E$ and
$\partial_\infty \Ome=C$. Assume that the OEP \eqref{1.4} has a solution $u\in
C^2(\overline\Ome)$ with $f$ satisfying~\eqref{hyp:h2}.

Then $\Ome=P_c^+$ for some $c\in\R$ and $u$ is translating invariant respect to $P$.
\end{theorem}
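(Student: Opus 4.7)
The plan is to prove $\Omega=P_c^+$ and the translation invariance of $u$ simultaneously by showing that $\Omega$ is invariant under the identity component $G$ of the stabilizer of $P$ in $\mathrm{Iso}(\h^n)$. The group $G$ is generated by reflections $\mathcal{R}_Q$ through totally geodesic hyperplanes $Q\subset\h^n$ orthogonal to $P$ (any two of which compose, by \eqref{Eq:TransHyperbolic}, to a hyperbolic translation along a geodesic of $P$) and acts transitively on each equidistant hypersurface $P_c$. Thus, once $\Omega$ is $G$-invariant, the assumptions that $\Omega$ is open, connected, with $C^2$ boundary, and $\partial_\infty\Omega=C$ force $\Omega=P_c^+$ for some $c\in\R$, and the invariance of $u$ follows automatically.

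To produce such a symmetry I would adapt the moving plane method of Theorem~\ref{th:mpm}. Fix any geodesic $\eta\subset P$ and consider the foliation $\{Q(s)\}_{s\in\R}$ of $\h^n$ by totally geodesic hyperplanes orthogonal to $\eta$ at $\eta(s)$. Because $\eta\subset P$, each $Q(s)$ is orthogonal to $P$, so $\mathcal{R}_{Q(s)}$ preserves $P$, the equator $E=\partial_\infty P$, and the hemispheres $C,C^c$; in particular both $\Sigma$ and its reflection $\mathcal{R}_{Q(s)}(\Sigma)$ have ideal boundary $E$. Write $Q^+(s)$ for the component of $\h^n\setminus Q(s)$ containing $\eta(+\infty)\in E$, and set $\widetilde\Omega_s^+:=\mathcal{R}_{Q(s)}(\Omega\cap Q^+(s))\subset Q^-(s)$. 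For $s$ near $+\infty$, where $Q(s)$ collapses to $\eta(+\infty)$, one expects the initial inclusion $\widetilde\Omega_s^+\subset\Omega\cap Q^-(s)$. On this overlap, $w_s:=u-v_s$ where $v_s:=u\circ\mathcal{R}_{Q(s)}$ (which solves the first equation of \eqref{1.4} by Lemma~\ref{lemmaARF}) satisfies a linear elliptic equation $\Delta w_s+c_sw_s=0$ with $c_s\le 0$ thanks to \eqref{hyp:h2}; $w_s$ vanishes on $Q(s)\cap\Omega$, is nonnegative on the piece of $\partial\widetilde\Omega_s^+$ coming from $\mathcal{R}_{Q(s)}(\Sigma)$ (where $v_s=0$ and $u>0$), and tends to zero along ideal approaches outside $E$, since both $u$ and $v_s$ tend to their asymptotic constant away from their respective zero loci, which share the ideal boundary $E$. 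Lemma~\ref{lem:maxp}, with exceptional equator $E$, then yields $w_s\ge 0$.

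Decreasing $s$, at the first critical value $\bar s$ one encounters one of the three configurations (A), (B), (C) from the proof of Theorem~\ref{th:mpm}, and Hopf's boundary maximum principle or Serrin's corner lemma forces $\mathcal{R}_{Q(\bar s)}(\Omega)=\Omega$ and $u\circ\mathcal{R}_{Q(\bar s)}=u$. Varying $\eta$ and its parametrization in $P$ produces a family of reflection symmetries generating the full group $G$, completing the proof. The hardest step will be verifying the initial position $\widetilde\Omega_s^+\subset\Omega\cap Q^-(s)$ for $s$ large: because $\Sigma$ accumulates at every point of $E$, including $\eta(+\infty)$, the initialization of Theorem~\ref{th:mpm} cannot simply be copied, and a finer asymptotic analysis of how $\Omega$ and $\mathcal{R}_{Q(s)}(\Omega)$ approach each other near $\eta(+\infty)$ is needed.
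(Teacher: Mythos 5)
There is a genuine gap, and it is precisely the one you flag at the end: the initialization of your moving plane scheme fails, and this is not a deferrable technicality but the reason the paper takes a different route. Every hyperplane $Q(s)$ in your foliation is orthogonal to $P$, so $\mathcal{R}_{Q(s)}$ fixes the equator $E$ setwise; consequently $\Sigma$ and $\mathcal{R}_{Q(s)}(\Sigma)$ have the \emph{same} ideal boundary $E$ for every $s$, and both accumulate on all of $E\setminus\{\eta(+\infty)\}$ --- in particular near $\eta(-\infty)$ --- no matter how large $s$ is. There is no a priori ordering between them there. Already in $\h^2$, taking the half-plane model with $P=\{x=0\}$ and $E=\{0,\infty\}$, the two ends of $\Sigma$ may approach $0$ and $\infty$ at completely unrelated ``distances'' from $P$, and the inversion $\mathcal{R}_{Q(s)}$ (in a large half-circle centered at $0$) sends the end of $\Sigma$ near $\infty$ to a curve near $0$ that can perfectly well cross the other end of $\Sigma$; so $\widetilde\Omega_s^+\subset\Omega\cap Q^-(s)$ need not hold for \emph{any} $s$. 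The paper avoids this by sliding in the direction \emph{normal} to $P$: it uses the hyperbolic translations $\mathcal{L}_\gamma^t$ along a geodesic $\gamma$ orthogonal to $P$ with $\gamma(+\infty)\in C$. As $t\to-\infty$ the translated boundary $\mathcal{L}_\gamma^t(\Sigma)$ collapses in the ideal compactification to the single point $\gamma(-\infty)\notin C=\partial_\infty\Omega$, which yields the initial inclusion $\Omega\subset\Omega_t$ for free; the comparison $u\le v_t$ then comes from Lemma~\ref{lem:maxp} exactly as you describe, Hopf's lemma pushes the inclusion up to $t=0$, and a second sliding along geodesics $\tilde\gamma\subset P$ --- composed with a small normal push $t<0$ to keep the inclusion strict and the tangency argument available --- gives translation invariance along $P$ after letting $t\to 0$.

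A second, independent problem is the last step of your plan. Even if the moving plane argument could be run for each geodesic $\eta\subset P$, it would produce at most \emph{one} symmetry hyperplane $Q(\bar s(\eta))$ per geodesic, not a one-parameter family. It is entirely consistent with that output that all of these hyperplanes pass through a common point $p_0\in P$ (for any $\eta$, the hyperplane orthogonal to $\eta$ at the foot of the perpendicular from $p_0$ contains $p_0$), in which case the group they generate is the group of rotations about the geodesic normal to $P$ at $p_0$, not the full translation group $G$ along $P$; and a rotationally symmetric domain whose boundary has ideal boundary $E$ need not be a half-space $P_c^+$. So ``varying $\eta$ and its parametrization'' does not by itself generate $G$, and this step would require a separate argument. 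The paper's sliding method sidesteps this entirely because it establishes invariance under the one-parameter groups $\{\mathcal{L}_{\tilde\gamma}^s\}_{s\in\R}$ directly.
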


Let us mention that this result has a great similarity with \cite[Theorem~3.1]{dCL} by
do Carmo and Lawson dealing with constant mean curvature hypersurfaces.

\begin{proof}
Let $\gamma$ be a geodesic normal to $P$ such that $\gamma(+\infty)\in C$ and
$\{\mathcal L_\gamma^t\}_{t\in\R}$ the group of hyperbolic translations along $\gamma$.

For any $t\in \R$, we define $\Ome_t=\mathcal L_\gamma^t(\Ome)$ and $v_t=u\circ \mathcal
L_\gamma^{-t}$ which is a function defined on $\Ome_t$. The first part of the proof
consists in proving the following fact
\begin{fact}
For any $t\le0$, $\Ome\subset \Ome_t$ and $u\le v_t$ on $\overline\Ome$.
\end{fact}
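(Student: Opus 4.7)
The plan is to run a moving translation argument from $t = 0$ downward, showing that the inclusion $\Omega \subset \Omega_t$ and the inequality $u \leq v_t$ persist for all $t \leq 0$. Define
\[
\mathcal T = \{t \in (-\infty,0] : \Omega \subset \Omega_s \text{ and } u \leq v_s \text{ on } \overline \Omega \text{ for all } s \in [t,0]\}.
\]
Since $0 \in \mathcal T$, it suffices to show $\mathcal T = (-\infty,0]$, which I plan to do by proving $\mathcal T$ is both closed and open in $(-\infty,0]$. To keep the notation unambiguous I denote by $C_\infty$ the uniform limit of $u$ at infinity from \eqref{1.4}, distinguishing it from the hemisphere $C$.

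The first observation is that, for $t \leq 0$, the inclusion alone forces the inequality via Lemma \ref{lem:maxp}. Given $\Omega \subset \Omega_t$, I would set $w := v_t - u$ on $\overline\Omega$. By \eqref{hyp:h2} the function $w$ satisfies $\Delta w + cw = 0$ with $c \leq 0$, and $w$ is bounded because $u$ is. On $\partial \Omega$, $u = 0$ and $v_t \geq 0$, so $w \geq 0$. At an ideal point $q \in \partial_\infty \Omega \setminus E$, i.e.\ in the interior of the hemisphere $C$, the point $\mathcal L_\gamma^{-t}(q)$ lies in the interior of $\mathcal L_\gamma^{-t}(C) \supset C$, and in particular stays far from $\partial_\infty \partial\Omega$ and $\partial_\infty \partial\Omega_t$; the uniform decay condition in \eqref{1.4} then gives $u(p), v_t(p) \to C_\infty$ as $p \to q$, so $\liminf w \geq 0$ there. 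Lemma \ref{lem:maxp} applied to $w$ yields $w \geq 0$, i.e.\ $u \leq v_t$. Closedness of $\mathcal T$ follows from continuity of $t \mapsto \mathcal L_\gamma^t$ and pointwise passage to the limit in both the inclusion and the inequality.

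The decisive step is openness. Suppose for contradiction that $t^* := \inf \mathcal T > -\infty$. The strict form of Lemma \ref{lem:maxp} applied at $t = t^*$ gives a dichotomy. Either $u \equiv v_{t^*}$ on $\Omega$, in which case $\{u = 0\} = \{v_{t^*} = 0\}$ forces $\Omega = \Omega_{t^*}$, so $\Omega$ is invariant under $\mathcal L_\gamma^{t^*}$; passing to ideal boundaries, $C = \partial_\infty \Omega$ would then be sent to a spherical cap bounded by $\mathcal L_\gamma^{t^*}(E) \neq E$, contradicting invariance. Or $u < v_{t^*}$ strictly in $\Omega$: at any potential tangency point $p \in \partial \Omega \cap \partial \Omega_{t^*}$ one would have $u(p) = v_{t^*}(p) = 0$ and $\partial_\nu u(p) = \partial_\nu v_{t^*}(p) = \alpha$, the latter by transporting the Neumann condition through the isometry $\mathcal L_\gamma^{-t^*}$, which identifies the outward normal at $p$ with the outward normal at $\mathcal L_\gamma^{-t^*}(p) \in \partial \Omega$; Hopf's boundary lemma applied to $w_{t^*} = v_{t^*} - u \geq 0$ would then force $w_{t^*} \equiv 0$, contradicting strictness. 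Hence $\partial \Omega \cap \partial \Omega_{t^*} = \emptyset$. Combined with the disjointness of $E$ and $\mathcal L_\gamma^{t^*}(E)$ on $\partial_\infty \h^n$, continuity of $\mathcal L_\gamma^t$ then propagates $\Omega \subset \Omega_t$ to some interval $[t^* - \epsilon, 0]$, contradicting the definition of $t^*$.

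The hard part will be the careful treatment of infinity: verifying the $\liminf$ hypothesis of Lemma \ref{lem:maxp} on $\partial_\infty \Omega \setminus E$ (which relies on understanding how the extension of $\mathcal L_\gamma^{-t}$ to $\overline{\h^n}$ acts on the hemisphere $C$), and ensuring that strict separation of $\partial \Omega$ from $\partial \Omega_{t^*}$ in compact regions combines with asymptotic separation to yield a uniform separation surviving for $t$ slightly less than $t^*$. Extracting $\partial_\nu v_{t^*}(p) = \alpha$ from the Neumann condition through the isometry is the technical ingredient that makes Hopf's lemma applicable at the hypothetical boundary tangency, and ties the argument to the overdetermined structure of \eqref{1.4}.
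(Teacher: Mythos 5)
Your reduction of the inequality to the inclusion is exactly the paper's: given $\Omega\subset\Omega_t$, set $w_t=v_t-u$, check $\liminf w_t\ge 0$ along sequences converging to $\partial\Omega\cup(\partial_\infty\Omega\setminus E)$ (using the uniform limit of $u$ far from the boundary and the fact that $\mathcal L_\gamma^{-t}$ maps $\mathrm{int}(C)$ into itself --- note your inclusion $\mathcal L_\gamma^{-t}(C)\supset C$ is written backwards, but harmlessly so), and apply Lemma~\ref{lem:maxp}. The Hopf-lemma analysis at a boundary tangency also matches the paper. The problem is the continuation argument for the inclusion itself.

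You slide from $t=0$ downward, and your proof that $\mathcal T$ is open must in particular produce some $\epsilon>0$ with $\Omega\subset\Omega_t$ for all $t\in(-\epsilon,0]$. Your openness argument cannot do this: at $t^*=0$ the first branch of your dichotomy ($u\equiv v_{t^*}$, hence $\Omega=\Omega_{t^*}$) holds trivially because $\mathcal L_\gamma^0=\mathrm{Id}$, and the intended contradiction ``$C$ is sent to a cap bounded by $\mathcal L_\gamma^{t^*}(E)\neq E$'' evaporates since $\mathcal L_\gamma^0(E)=E$. So nothing rules out $\mathcal T=\{0\}$. And indeed nothing soft can rule it out: the statement $\Omega\subset\Omega_t$ for small $|t|$ says that a small translation of $\Sigma$ toward $\gamma(-\infty)$ stays outside $\Omega$, which is a monotonicity of $\Sigma$ in the $\gamma$-direction --- essentially part of the conclusion ($\Omega=P_c^+$), not something available at the start. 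The paper avoids this by running the sliding in the opposite direction: since $\partial_\infty\Sigma=E$, the translated boundary $\mathcal L_\gamma^t(\Sigma)$ converges to $\gamma(-\infty)\notin C=\partial_\infty\Omega$ as $t\to-\infty$, so $\Omega\subset\Omega_t$ holds for all $t\le T$ with $T\ll 0$ for purely topological reasons; one then takes $\bar t$ the largest such $T$, uses that $\overline{\Omega\setminus\Omega_t}$ is compact for $t<0$ (because $E$ and $\mathcal L_\gamma^t(E)$ are disjoint) to locate a first interior tangency if $\bar t<0$, and Hopf's lemma then forces $\Omega=\Omega_{\bar t}$, which contradicts $\bar t<0$ at the level of ideal boundaries. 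You need this ``start at $-\infty$'' base case; without it the proof does not get off the ground.
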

First we notice that for $t<0$, $\partial _\infty\mathcal L_\gamma^t(P)\cap C=\emptyset$. So let us prove
that, if $\Ome\subset \Ome_t$, then $u\le v_t$ on $\overline\Ome$.

Since $\mathcal L_\gamma^t$ is an isometry, $v_t$ satisfies:
\begin{equation}\label{Eq:vt}
\begin{cases} 
\Delta{v_t}+f(v_t)=0  &\text{in } \Omega_t,\\
v_t>0  &\text{in } \Omega_t,\\
v_t=0  &\text{on }  \partial\Ome_t=\boL_\gamma^t(\partial\Ome),\\
v_t (p)\to C & \text{uniformly as }  d(p , \partial \Omega_t )\to +\infty \\
\langle\nabla{v_t},\vec{v}\rangle=\alpha &\text{on }\partial\Ome_t,
\end{cases}
\end{equation}

Then the function $w_t=v_t-u$ solves the equation $\Delta w_t+cw_t=0$ where $c$ is defined by
$$
c(p)=\begin{cases}
-1&\textrm{if } w_t(p)=0\\
\frac{f(u(p))-f(v_t(p))}{u(p)-v_t(p)}&\textrm{if } w_t(p)\neq 0
\end{cases}
$$
Since $f$ is non increasing, $c$ is a non positive function. As in the proof of
Theorem~\ref{th:mpm}, $\liminf w_t(p_n)\ge 0$ for any sequence $p_n$ converging to some
point of $\partial\Ome\cup(\partial_\infty\Ome\setminus E)$. So Lemma~\ref{lem:maxp} gives
$w_t\ge 0$ and $v_t\ge u$ on $\Ome$.

Now let us prove $\Ome\subset \Ome_t$ for $t\le 0$. Since $\partial_\infty \Sigma=E$, $\mathcal
L_\gamma^t(\Sigma)$ goes to $\gamma(-\infty)$ as $t$ goes to $-\infty$. Hence there is
$T<0$ such that for any $t\le T$, $\Ome\subset \Ome_t$. Let $\bar t$ be the largest non
positive $T$ satisfying the above property. Since for negative $t$, $\Ome\setminus\Ome_t$
has compact closure in $\h^n$, if $\bar t<0$, we have $\Ome\subset \Ome_{\bar t}$ and
$\partial\Ome$ and $\partial \Ome_{\bar t}$ are tangent at some point $\bar p$. Since
$\Ome\subset \Ome_{\bar t}$, we have $u\le v_{\bar t}$ and both solutions of the PDE have
the same Neumann boundary data at $\bar p$. Thus Hopf's boundary maximum principle implies
that $u=v_{\bar t}$ so $\Ome=\Ome_{\bar t}$ \textit{i.e.} $\bar t=0$. This finishes the
proof of the fact.

The second step of the proof is to prove invariance with respect to hyperbolic translation
along $P$. So let $\tilde\gamma$ be a geodesic in $P$ and $\{\mathcal
L_{\tilde\gamma}^s\}_{s\in\R}$ the group of hyperbolic translations along $\tilde\gamma$. We
then denote by $\mathcal L^{s,t}$ the isometry of $\h^n$ which is the composition
$\mathcal L_{\tilde\gamma}^s\circ\mathcal L_{\gamma}^t$. Then we define
$\Ome_{s,t}=\mathcal L^{s,t}(\Ome)$ and $v_{s,t}=v\circ(\mathcal L^{s,t})^{-1}$ which is
defined on $\Ome_{s,t}$. Then we have the following fact

\begin{fact}
For any $s\in\R$ and $t<0$, $\Ome\subset \Ome_{s,t}$ and $u\le v_{s,t}$ on $\overline\Ome$.
\end{fact}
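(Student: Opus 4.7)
The plan is to mimic the sliding argument of the previous fact, now keeping $s\in\R$ fixed and sliding in $t$. A preliminary geometric remark is crucial: since $\tilde\gamma\subset P$, the translation $\mathcal L_{\tilde\gamma}^s$ preserves $P$ set-wise, hence preserves $E=\partial_\infty P$ and (by continuity in $s$ from $s=0$) each asymptotic hemisphere, so $\mathcal L_{\tilde\gamma}^s(C)=C$. In the upper half-space model with $\gamma$ vertical, $\mathcal L_\gamma^t$ acts on $\partial_\infty\h^n$ as a Euclidean dilation, whence for $t<0$ one has $\mathcal L_\gamma^t(C)\supsetneq C$ and $\mathcal L_\gamma^t(E)\subset\partial_\infty\h^n\setminus C$. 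Composing, $\partial_\infty\Ome_{s,t}=\mathcal L^{s,t}(C)$ contains $C=\partial_\infty\Ome$ in its interior (as a subset of $\partial_\infty\h^n$) for every $t<0$.

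Next I would establish the comparison step: if $\Ome\subset\Ome_{s,t}$, then $w_{s,t}:=v_{s,t}-u\ge 0$ on $\overline\Ome$. This is essentially verbatim the argument from the previous fact: $v_{s,t}$ solves the PDE on $\Ome_{s,t}$, so $w_{s,t}$ satisfies $\Delta w_{s,t}+c\,w_{s,t}=0$ with $c\le 0$ by~\eqref{hyp:h2}; the behaviour along $\partial\Ome\cup(\partial_\infty\Ome\setminus E)$ yields $\liminf w_{s,t}\ge 0$, and Lemma~\ref{lem:maxp} concludes.

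I would then run the sliding in $t$. As $t\to-\infty$, $\mathcal L_\gamma^t(\Sigma)$ shrinks in the ideal topology to $\gamma(-\infty)$, which lies in the open asymptotic hemisphere opposite to $C$; applying the fixed isometry $\mathcal L_{\tilde\gamma}^s$, $\mathcal L^{s,t}(\Sigma)$ shrinks to a point still outside $C$, so $\Ome\subset\Ome_{s,t}$ for all $t$ sufficiently negative. Set $\bar t=\sup\{t\le 0:\Ome\subset\Ome_{s,t'}\text{ for all }t'\le t\}$ and suppose, for contradiction, that $\bar t<0$. The inclusion $C\subset\mathrm{int}(\partial_\infty\Ome_{s,\bar t})$ from the preliminary remark implies that $\Ome\setminus\Ome_{s,\bar t}$ has compact closure in $\h^n$, so passing to the limit one obtains $\Ome\subset\Ome_{s,\bar t}$ with interior tangency of $\partial\Ome$ and $\partial\Ome_{s,\bar t}$ at some point $\bar p$.

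At $\bar p$ both $u$ and $v_{s,\bar t}$ vanish and share the Neumann datum $\alpha$, giving $w_{s,\bar t}(\bar p)=\partial_\nu w_{s,\bar t}(\bar p)=0$. Hopf's boundary maximum principle (combined with the strong maximum principle in $\Ome$) then forces $u\equiv v_{s,\bar t}$ on $\Ome$, whence $\Ome=\Ome_{s,\bar t}$. But this contradicts $\partial_\infty\Ome\subsetneq\partial_\infty\Ome_{s,\bar t}$ for $\bar t<0$, so $\bar t=0$ and the fact follows. The hard part is really the preliminary geometric claim that $\partial_\infty\Ome_{s,t}$ contains $\partial_\infty\Ome$ in its interior for every $t<0$: it both makes the sliding argument well-defined (through the required compactness of $\Ome\setminus\Ome_{s,t}$) and provides the final contradiction when $\bar t<0$; it is also what forces the strict inequality $t<0$ in the statement, since at $t=0$ the set $\Ome_{s,0}$ is merely a $P$-translate of $\Ome$ and no such strict containment at infinity is available.
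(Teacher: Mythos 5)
Your argument is correct, and it rests on exactly the same two pillars as the paper's: the observation that $\partial_\infty\mathcal L^{s,t}(P)\cap C=\emptyset$ for $t<0$ (because $\mathcal L_{\tilde\gamma}^s$ preserves each asymptotic hemisphere and $\mathcal L_\gamma^t$ expands $C$ for $t<0$), which feeds Lemma~\ref{lem:maxp} to give the comparison $u\le v_{s,t}$ whenever $\Ome\subset\Ome_{s,t}$, and a sliding argument closed off by Hopf's boundary point lemma at an interior tangency. Where you genuinely diverge is in how the inclusion $\Ome\subset\Ome_{s,t}$ is swept out: the paper fixes $t<0$, takes the first fact ($\Ome\subset\Ome_{0,t}$) as the anchor, and slides laterally in $s$ in both directions, showing $s^-=-\infty$ and $s^+=+\infty$; you instead fix $s$ and slide in $t$ from $-\infty$ up to $0$, re-establishing the base inclusion for each $s$ from the Hausdorff convergence of $\mathcal L^{s,t}(\Sigma)$ to the point $\mathcal L_{\tilde\gamma}^s(\gamma(-\infty))\notin C$. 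Your version has the mild advantage of making the second fact logically independent of the first (at $s=0$ it simply reproves it), while the paper's lateral sweep recycles the first fact and only has to produce tangency points in the $s$-direction; both routes deliver the same conclusion with the same amount of work. One small point to tighten: the compactness of $\overline{\Ome\setminus\Ome_{s,t}}$ that you invoke is vacuous at $t=\bar t$ (where the set is empty once you know $\Ome\subset\Ome_{s,\bar t}$); what you actually need is that it holds uniformly for $t$ in a compact interval $[\bar t,\bar t+\epsilon]\subset(-\infty,0)$, so that the points witnessing $\Ome\not\subset\Ome_{s,t_n}$ for $t_n\downarrow\bar t$ stay in a fixed compact set and subconverge to the tangency point $\bar p$. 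This follows from the same separation of ideal boundaries and is at the level of detail the paper itself leaves implicit.
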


The proof of this fact is similar to the first one. The arguments to prove that, if
$\Ome\subset \Ome_{s,t}$, $u\le v_{s,t}$ on $\overline\Ome$ are the same, once we have
noticed that $\partial _\infty \mathcal L^{s,t}(P)\cap C=\emptyset$. This comes from the fact that each
connected component of $\partial_\infty \h^n\setminus E$ is stable by the group
$\{\mathcal L_{\tilde\gamma}^s\}_{s\in\R}$ and the inclusion is true for $s=0$.

To prove $\Ome\subset \Ome_{s,t}$, we fix $t<0$ and we first remark that $\Ome\subset
\Ome_t=\Ome_{0,t}$. So we can look at
$$
s^-=\inf\{S\le 0\,|\, \forall s\in[S,0], \Ome\subset\Ome_{s,t}\}\quad \textrm{ and }
\quad s^+=\sup\{S\ge 0\,|\, \forall s\in[0,S], \Ome\subset\Ome_{s,t}\}.
$$
If $s^-$ is finite, as above, $\Ome_{s^-,t}$ and $\Ome$ are tangent somewhere and Hopf's
boundary maximum principle gives a contradiction. The same is true for $s^+$. This
finishes the proof of the fact.

Now we can finish the proof of our theorem. Since $\Ome\subset \Ome_{s,t}$ for $s\in\R$
and $t<0$, letting $t\to 0$ we get, $\Ome\subset \Ome_{s,0}$. Taking the image of this
inclusion by $\boL_{\tilde\gamma}^{-s}$ we get $\Ome_{-s,0}\subset \Ome$. Thus
$\Ome=\Ome_{s,0}$, $\Ome$ is translation invariant along $\tilde\gamma$ and then $P$ since
$\tilde\gamma$ is arbitrary.

The same argument gives that $u=v_{s,0}=u\circ\boL_{\tilde\gamma}^{-s}$; $u$ is
translation invariant along $P$.

This invariance implies that each connected component of $\partial\Ome$ is an equidistant
hypersurface $P_c$. Thus, since $\Ome$ is connected and $\partial_\infty\Ome=C$, we have
$\Ome=P_c^+$ for some $c\in\R$.
\end{proof}

\begin{remark}
In the above result, the function $u$ is then a function of the distance $s$ to some
hyperplane $P$ in $\h^n$. The PDE in~\eqref{1.4} can then be written in term of the variable
$s$ as the ODE:
$$
\partial_s^2u+(n-1)\tanh(s)s\partial_s u+f(u)=0.
$$

We notice that some aspects of the study of this ODE and the ones appearing in
Remarks~\ref{rk:1} and \ref{rk:2} can be found in~\cite{BiMa}.
\end{remark}


\section{$f$-extremal domains in $\mathbb H^2$}\label{sec:dim2}

From now on in this section, we will focus on the two dimensional case, i.e., $\Ome\subset \h^2$.

More precisely, we consider an unbounded open connected domain $\Ome$ in $\h^2$ whose $C^2$
boundary has only one connected boundary component $\Gamma$. We also assume that the
OEP~\eqref{1.4} has a solution $u$ on $\Ome$. If $\Gamma$ is compact and
hypothesis~\eqref{hyp:h2} is assumed,
Corollary~\ref{CorBounded} implies that $\Ome$ is the exterior of a geodesic ball and $u$
is radially symmetric. If $\Gamma$ is unbounded, then, in order to apply results of the
preceding sections, we need to understand the asymptotic behavior of $\Gamma$ : what is
$\partial_\infty\Gamma$?

We notice that, in Lemmata~\ref{lem:RRS}, \ref{lem:G(p)} and \ref{lem:2point} below,
hypothesis~\eqref{hyp:h2} is not assumed on $f$. As in Section~\ref{sec:mpm}, we use the
moving plane method but only for compact parts of domains. So we do not need to assume any
monotonicity for $f$. We refer to \cite{RRS} for such use of the moving plane method.

If $p\in \Gamma$, let us denote by $G(p)$ the endpoint in $\partial_\infty \h^2$ of the inward
normal half-geodesic line to $\Gamma=\partial\Ome$ at $p$. We recall that $(pG(p))$
denotes the half-geodesic line starting at $p\in \mathbb H^{n}$ and ending at $G(p)\in
\partial _{\infty} \mathbb H^{n}$.

The first step of our study of
$\partial_\infty \Gamma$ is given by the following lemma which is similar to
\cite[Lemma~2.4]{RRS}.

\begin{lemma}\label{lem:RRS}
Let $\Ome$ be as above and $p\in\Gamma$. The half-geodesic line $(pG(p))$ is inside $\Ome$.
\end{lemma}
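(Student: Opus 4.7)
The plan is a proof by contradiction via the moving plane method along the ray $\gamma:=(pG(p))$. Suppose $\gamma$ exits $\Omega$, and let $q$ be the first point of $\Gamma$ met by $\gamma$ after $p$; parametrise so that $\gamma(0)=p$, $\gamma(\tau)=q$ with $\gamma((0,\tau))\subset\Omega$, and set $m=\gamma(\tau/2)$. For $t\in\R$, let $P(t)$ be the geodesic through $\gamma(t)$ orthogonal to $\gamma$, put $P_0:=P(\tau/2)$, denote the corresponding reflection by $\boR_t$, and let $P^-(t)$ be the component of $\h^2\setminus P(t)$ containing $p$ when $t>0$.

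I intend to move $P(t)$ from $t=0^+$ upward and to reflect $\Omega$ across each $P(t)$, working only on compact pieces. For $t\in(0,\tau/2]$ let $\mathcal{C}_t$ denote the connected component of $\Omega\cap P^-(t)$ whose closure contains $p$. By the $C^2$-regularity of $\Gamma$ and the fact that $\gamma'(0)$ is the inward unit normal to $\Gamma$ at $p$, for small $t>0$ the cap $\mathcal{C}_t$ is relatively compact in $\h^2$ and $\boR_t(\mathcal{C}_t)\subset\Omega$. On such a reflected cap the function $w_t:=u-u\circ\boR_t$ satisfies $\Delta w_t+c_t w_t=0$ with bounded $c_t$, and because the cap is \emph{compact} the maximum principle delivers $w_t\ge 0$ with no sign hypothesis on $c_t$. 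This is why assumption~\eqref{hyp:h2} is not required here, as in the approach of~\cite{RRS}.

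Let $t^\ast\in(0,\tau/2]$ be the supremum of the values of $t$ for which the inclusion $\boR_s(\mathcal{C}_s)\subset\Omega$ persists on $(0,t]$. At $t=t^\ast$ one of the standard obstructions must appear: an interior tangential contact between $\boR_{t^\ast}(\Gamma)$ and $\Gamma$, a right-angle corner on $P(t^\ast)$, or---in the limit case $t^\ast=\tau/2$---a tangential contact at $q=\boR_{\tau/2}(p)$. In every case, the matching of the Neumann data $\partial_\nu u=\alpha$ on $\Gamma$ and on $\boR_{t^\ast}(\Gamma)$ allows either Hopf's boundary-point lemma or Serrin's corner lemma to force $w_{t^\ast}\equiv 0$ on $\boR_{t^\ast}(\mathcal{C}_{t^\ast})$. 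Unique continuation for the Lipschitz equation $\Delta u+f(u)=0$ then propagates this local identity to all of $\Omega$, giving $\boR_{t^\ast}(\Omega)=\Omega$. In particular $\gamma(2t^\ast)=\boR_{t^\ast}(p)\in\Gamma$, and since $\gamma((0,\tau))\subset\Omega$ this forces $2t^\ast=\tau$; hence $\Omega$ is $P_0$-symmetric and $q=\boR_{P_0}(p)$, with inward normal to $\Gamma$ at $q$ equal to $-\gamma'(\tau)$.

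The final contradiction, which I regard as the main obstacle, is extracted from this unwanted symmetry. Since $\Gamma$ is connected and $\boR_{P_0}$-invariant, it must meet $P_0$ at some point $r$, at which $\Gamma$ is necessarily perpendicular to $P_0$. Running the same moving-plane procedure along $P_0$ starting from $r$ produces a second reflection symmetry of $\Omega$; composed with $\boR_{P_0}$ it yields a non-trivial hyperbolic translation preserving $\Omega$. An unbounded connected planar domain with a single $C^2$ boundary component and connected complement cannot be invariant under both $\boR_{P_0}$ and a hyperbolic translation transverse to $P_0$, which is the desired contradiction. This is the two-dimensional hyperbolic analogue of~\cite[Lemma~2.4]{RRS}.
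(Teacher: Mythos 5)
The paper's own ``proof'' is just a pointer to \cite[Lemma~2.4]{RRS}, and your set-up reproduces that argument faithfully up to a point: reflecting the relatively compact caps $\mathcal{C}_t$ across the geodesics $P(t)$ orthogonal to the normal segment $pq$, using the maximum principle only on compact pieces so that no monotonicity of $f$ is needed, and observing that a symmetry at $t^*<\tau/2$ would put $\boR_{t^*}(p)=\gamma(2t^*)$ on $\Gamma\cap\gamma((0,\tau))$, which is impossible. (One technical point you should still justify is why $\mathcal{C}_t$ is relatively compact: since $c_t$ has no sign, your maximum principle genuinely requires it, and the connected component of $\Omega\cap P^-(t)$ adjacent to $p$ is not obviously bounded.)

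The genuine gap is the endgame. The bare statement ``$\Omega$ is symmetric with respect to $P_0$'' is not absurd --- exteriors of geodesic disks and half-planes bounded by equidistant curves all admit such reflections --- so a contradiction cannot be extracted from the symmetry alone, and your attempt to do so breaks down at every step: (i) running the moving plane again from $r\in\Gamma\cap P_0$ produces a second symmetry \emph{only if} the inward normal ray at $r$ leaves $\Omega$, which is precisely what the lemma is trying to rule out and cannot be assumed; (ii) $\Gamma$ need only meet $P_0$ tangentially or orthogonally at $r$, not necessarily orthogonally; (iii) even granting a second reflection in a geodesic $P_1\perp P_0$ through $r$, the composition $\boR_{P_1}\circ\boR_{P_0}$ is a rotation by $\pi$ about $P_0\cap P_1$, not a hyperbolic translation (translations come from reflections in two \emph{disjoint, non-asymptotic} geodesics); and (iv) the closing classification claim is false: a half-plane bounded by a geodesic $\beta$ is unbounded, has one $C^2$ boundary component and connected complement, and is invariant both under the reflection in any geodesic orthogonal to $\beta$ and under the hyperbolic translations along $\beta$, which are transverse to that geodesic (moreover, connectedness of the complement is not a hypothesis of this lemma). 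The correct contradiction, and the one in \cite{RRS}, is \emph{boundedness}: Hopf (or Serrin's corner lemma) gives $u\equiv u\circ\boR_{t^*}$ on $\boR_{t^*}(\mathcal{C}_{t^*})$, which forces $\boR_{t^*}(\mathcal{C}_{t^*})$ to be open and closed in $\Omega\cap P^+(t^*)$ (a boundary point of it lying in $\Omega$ would be the image of a point of $\Gamma$ and would give $u=0$ inside $\Omega$); by connectedness $\Omega=\mathcal{C}_{t^*}\cup(\Omega\cap P(t^*))\cup\boR_{t^*}(\mathcal{C}_{t^*})$, which is bounded, contradicting the standing assumption that $\Omega$ is unbounded. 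This is the missing idea; without it the proof does not close.
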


\begin{proof}
Same as in~\cite{RRS}
\end{proof}

The consequence of this property is that $G(p)\in \partial_\infty \Ome$. Actually we
can say more, we have the following lemma.

\begin{lemma}\label{lem:G(p)}
Let $\Ome$ be as above and $p\in \Gamma$. Then, either $G(p)\notin \partial_\infty \Gamma$
or $\Ome$ is a horodisk.
\end{lemma}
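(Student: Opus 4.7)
The plan is to assume $G(p) \in \partial_\infty \Gamma$ and show $\Omega$ must be the horodisk based at $G(p)$ passing through $p$. I would work in the halfspace model with $G(p) = \infty$ and $p = (0,1)$; by Lemma \ref{lem:RRS} the vertical ray $(pG(p)) = \{(0,y) : y \geq 1\}$ lies in $\Omega$, and since the inward normal to $\Gamma$ at $p$ is vertical, $\Gamma$ is tangent at $p$ to the horocycle $H = \{y = 1\}$, with $\Omega$ locally on the side $\{y > 1\}$. The hypothesis $G(p) \in \partial_\infty \Gamma$ provides a sequence $p_n \in \Gamma$ converging to $G(p)$ in the ideal topology.

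The strategy is to establish invariance of $\Omega$ under the group of parabolic translations based at $G(p)$, which in the halfspace model are horizontal Euclidean translations, using a moving plane argument with reflections across vertical geodesics $\beta_\lambda = \{x = \lambda, y > 0\}$. By \eqref{Eq:TransParabolic}, compositions of two such reflections produce parabolic translations based at $G(p)=\infty$, so symmetry across all $\beta_\lambda$ yields invariance under the full parabolic group. The argument follows the scheme of Theorem \ref{th:mpm}: for each $\lambda$ one considers the reflected subdomain $\widetilde\Omega_\lambda^+ = \boR_{\beta_\lambda}(\Omega \cap \{x > \lambda\})$, introduces $w_\lambda = u - u \circ \boR_{\beta_\lambda}$ on $\widetilde\Omega_\lambda^+ \cap \Omega$, and tries to conclude $w_\lambda \equiv 0$ via the maximum principle, Hopf's boundary lemma, and Serrin's corner lemma. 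Since \eqref{hyp:h2} is not assumed, the moving plane is only run on compact subregions of $\Omega$ truncated by large geodesic balls, and one passes to the limit; the sequence $p_n \to G(p)$ is precisely what allows these truncations to furnish enough points of $\Gamma$ far from $p$ to close the argument.

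Once symmetry across every $\beta_\lambda$ is established, $\Omega$ is invariant under all parabolic translations based at $G(p)$. By Definition \ref{Def:HoroSymmetric} each connected component of $\Gamma$ is then an orbit of this group, i.e., a horizontal line in the halfspace model, namely a horocycle based at $G(p)$. Since $\Gamma$ is one connected $C^2$ curve passing through $p$ and tangent there to $H$, we conclude $\Gamma = H$, so $\Omega = \{y > 1\}$ is the horodisk based at $G(p)$.

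The main obstacle is executing the compact moving plane argument rigorously in the absence of \eqref{hyp:h2}. The comparison function $w_\lambda$ satisfies $\Delta w_\lambda + c\, w_\lambda = 0$ with $c$ not necessarily non-positive, so Lemma \ref{lem:maxp} is not directly available; one must confine the argument to compact sub-pieces of $\widetilde\Omega_\lambda^+ \cap \Omega$ where the standard maximum principle applies (as in \cite{RRS}), carefully control the boundary of the truncation using the anchor points $p_n$ from the hypothesis, and then let the truncation radius tend to infinity. Handling tangency at interior points and at corner points $\partial \Gamma\cap\beta_\lambda$ in the limit is the main technical delicacy.
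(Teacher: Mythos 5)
Your overall plan --- prove that $\Ome$ is invariant under the whole parabolic group based at $G(p)=\infty$ by moving vertical geodesics $\beta_\lambda=\{x=\lambda\}$ --- is not the paper's route and, more importantly, I do not see how to execute it. Three concrete problems. First, a moving plane argument needs a starting position at which the reflected cap sits inside $\Ome$; for vertical geodesics there is none, because at this stage nothing is known about how $\Gamma$ approaches $\infty$ (Lemma~\ref{lem:2point}, which controls $\partial_\infty\Gamma$, is proved \emph{after} and \emph{from} the present lemma, so a priori $\partial_\infty\Gamma$ could contain whole intervals and $\Gamma$ could oscillate arbitrarily near $\infty$). Second, without~\eqref{hyp:h2} you must work on compact pieces, but the caps $\Ome\cap\{x>\lambda\}$ are non-compact precisely in the direction of $\infty\in\partial_\infty\Gamma$; truncating by large balls creates artificial boundary on which $w_\lambda$ has no sign, and you offer no mechanism to control it. Third, even granting that the method runs, a moving plane stops at \emph{one} position and yields symmetry across a single geodesic; it does not give symmetry across every $\beta_\lambda$, which is what you would need to generate the parabolic group via~\eqref{Eq:TransParabolic}. (The only place the paper obtains invariance under a whole one-parameter group, Theorem~\ref{Th:Graphical}, is a sliding argument that relies on Lemma~\ref{lem:maxp} and hence on~\eqref{hyp:h2}.)

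The paper's proof is structured quite differently. It reflects across the family of geodesics $\partial D_t$ with $D_t=\{(x-t)^2+y^2\le t^2\}$: these all share the ideal endpoint $(0,0)$, the foot of the normal geodesic at $p$, and the caps $D_t\cap\Ome_{l,2R}$ are genuinely compact because they are cut off by $\Gamma_l$ (which exits the disk of radius $2R$) and by the normal ray $(pG(p))$, which lies in $\Ome$ by Lemma~\ref{lem:RRS} and which $D_t$ never meets; so the comparison takes place on a bounded region whose boundary consists only of pieces of $\Gamma$ and of $\partial D_t$. A first contact would force $\Ome$ to be bounded, a contradiction, so letting $t\to R$ and then $R\to\infty$ yields symmetry of $\Ome$ and $u$ across the \emph{single} geodesic $\{x=0\}$. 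The second half of the proof, which your proposal omits entirely, is a topological barrier argument: writing $*$ for this reflection, for $q$ near $p$ the proper arc formed by $(G(*q)\,{*q})$, the piece of $\Gamma$ between $*q$ and $q$, and $(qG(q))$ cannot cross $\Gamma$; if $G(q)\ne\infty$ this arc traps $\Gamma$ away from $\infty$, contradicting $\infty\in\partial_\infty\Gamma$. Hence $G(q)=\infty$ near $p$, and an open--closed argument gives $G\equiv\infty$ on all of $\Gamma$, i.e.\ $\Gamma$ is a horocycle and $\Ome$ a horodisk. Without these two ingredients --- the correctly chosen compact reflection family and the barrier/open--closed step --- your proposal does not close.
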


\begin{proof}
Let us assume that $G(p)\in \partial_\infty\Gamma$. Using the half-space model for
$\h^2=\{(x,y)\in\R\times(0,+\infty)\}$,
we can assume that $p=(0,1)$ and $G(p)=\infty$, i.e., $\Gamma$ is horizontal at
$p$. $\Gamma\setminus\{p\}$ has two connected components that we denote by $\Gamma_l$ and
$\Gamma_r$ which, near $p$, lies respectively in $\{x>0\}$ and $\{x<0\}$. Moreover we can
assume that $\infty\in\partial_\infty\Gamma_l$. This means that, for any $R>0$, there are
points in $\Gamma_l$ outside the halfdisk $\{(x,y)\in\R\times(0,+\infty)\, : \,x^2+y^2\le
R^2\}$.

Let $\Ome_{l,R}$ be the connected component of $\Ome\setminus(pG(p))\cap
\{(x,y)\in\R\times(0,+\infty) \, : \, x^2+y^2\le R^2\}$ with $p$ and a part of $\Gamma_l$ in
its closure (the connected component that lies in $\{x>0\}$ near $p$). Let $D_t$ be the
half-disk $D_t=\{(x,y)\in \R\times(0,+\infty)\, : \, (x-t)^2+y^2\le t^2\}$. First we see that,
since $\Gamma_l$ has points
outside $\{(x,y)\in\R\times(0,+\infty) :x^2+y^2\le 4R^2\}$, for any $0<t\le R$,
$\Ome_{2R,t}=D_t\cap \Ome_{l,2R}$ is bounded and empty if $t$ is sufficiently small (see
Figure~\ref{fig:fig1}).

\begin{figure}[!h]
\begin{center}
\resizebox{0.7\linewidth}{!}{\input{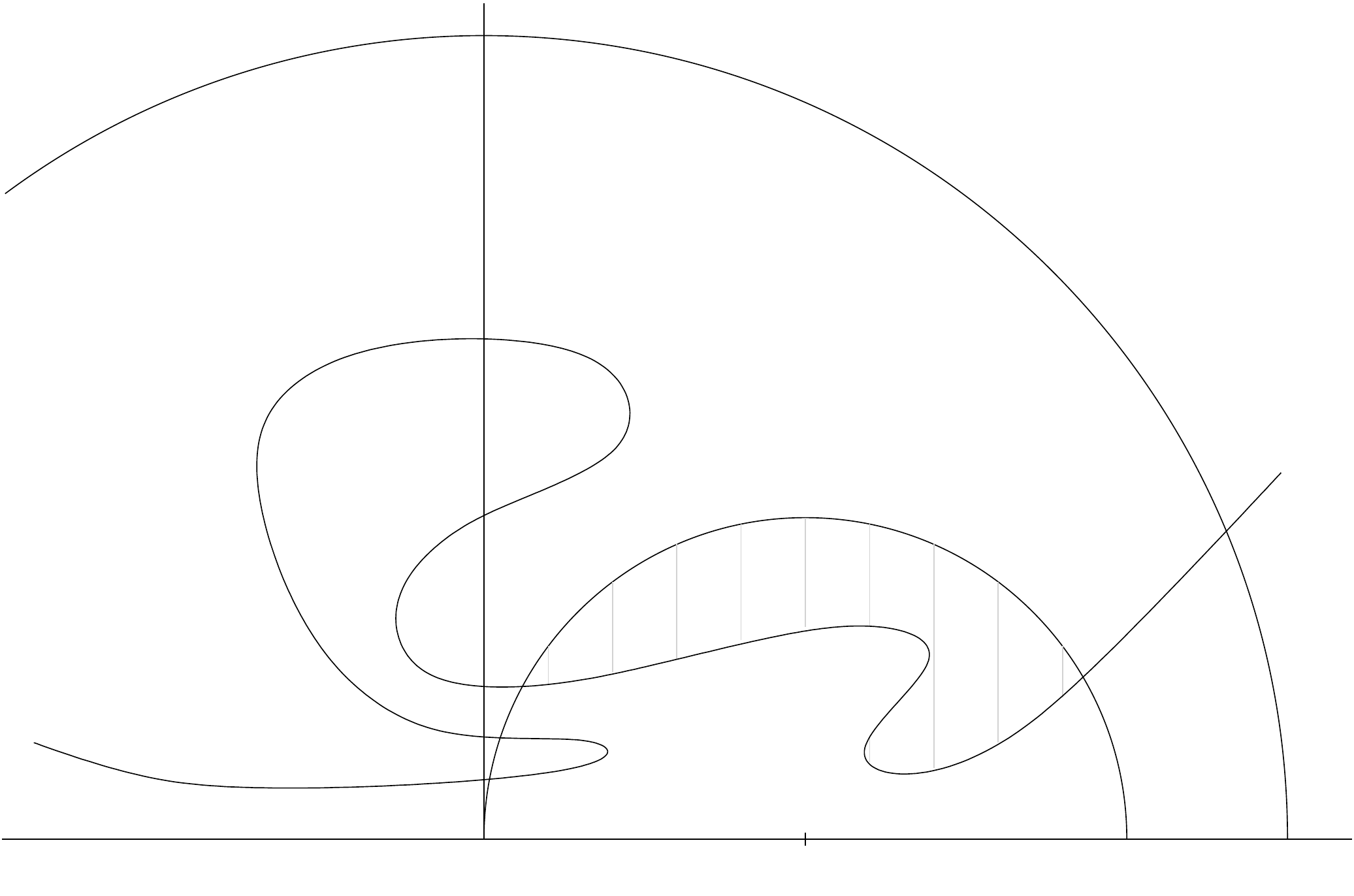_t}}
\end{center}
\caption{The domain $\Ome_{2R,t}$}\label{fig:fig1}
\end{figure}

So now we can do Alexandrov reflection for the subsets $\Ome_{2R,t}$ for $t\le R$, we
symmetrize $\Ome_{2R,t}$ with respect to $\partial D_t$ which is a geodesic of $\h^2$ and
we get $\widetilde\Ome_{2R,t}$. On $\widetilde\Ome_{2R,t}$, there is the symmetrized
solution $\tilde u_{2R,t}$. We fix $R$ and let $t$ move from $0$ to $R$. When $t$ is small
$\widetilde\Ome_{2R,t}\subset \Ome$ and $\tilde u_{2R,t}$ is below $u$. If there is a
first contact between $\tilde u_{2R,t}$ and $u$ for $t\le R$, we get a symmetry for $\Ome$
and $\Ome$ is bounded, which is a contradiction.

So we have $\widetilde \Ome_{2R,t}\subset \Ome$ and $\tilde u_{2R,t}\le u$ for any $t\le
R$. Thus $\tilde u_{2R,R}\le u$ on $\widetilde\Ome_{2R,R}$ for any $R>0$. Letting $R$ goes
to $+\infty$ we get that $\Ome$ and $u$ are symmetric with respect to $\{x=0\}$.

If $(x,y)\in\R^2$, we denote $*(x,y)=(-x,y)$. Let $q$ be a point in $\Gamma$ close to
$p$; we have $G(*q)=*G(q)$. Because of Lemma~\ref{lem:RRS}, the geodesic half-lines
$(qG(q))$ and $(*qG(*q))$ do not intersect $\Gamma$. Joining this two geodesic half-lines
by the piece of arc in $\Gamma$ between $q$ and $*q$, we get a proper curve in
$\R\times\R_+^*$ that does not cross $\Gamma$. If $G(q)\not= \infty$, this implies that
$\Gamma$ stays far away from $\infty$ (see Figure~\ref{fig:fig2}). As we assume that $\infty\in
\partial_\infty\Gamma$, we can conclude that $G(q)=\infty$ for $q$ close to $p$. So the
set $\{q\in \Gamma \, : \, G(q)=\infty\}$ is open and closed in $\Gamma$ and $G(q)=\infty$
for any $q$ in $\Gamma$ and $\Ome$ is a horodisk.

\begin{figure}[!h]
\begin{center}
\resizebox{0.6\linewidth}{!}{\input{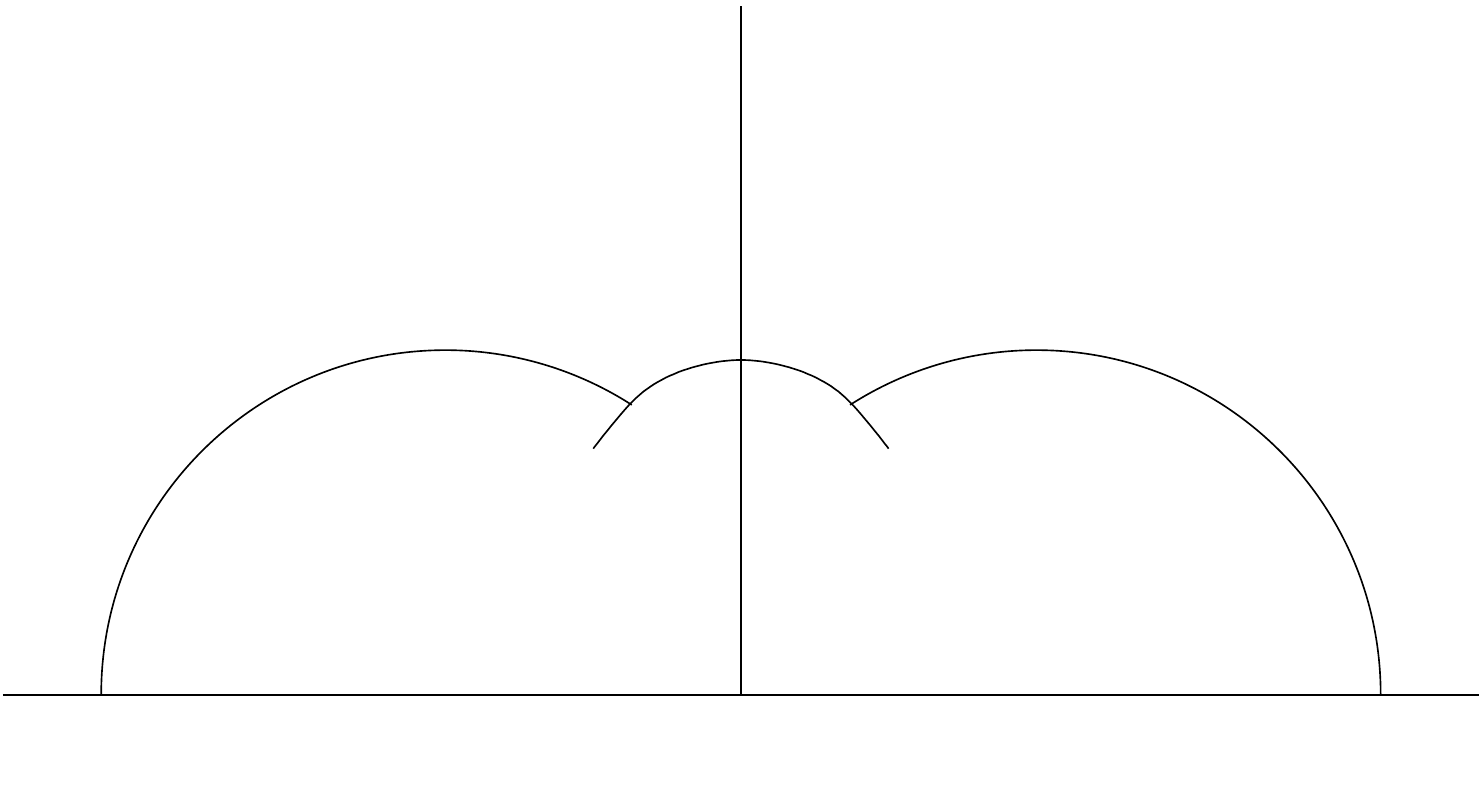_t}}
\end{center}
\caption{The barriers for $\Gamma$ if $G(q)\neq\infty$}\label{fig:fig2}
\end{figure}
\end{proof}

The preceding lemma allows us to control the asymptotic behaviour of $\Gamma$.

\begin{lemma}\label{lem:2point}
$\partial_\infty\Gamma$ is made of at most two points.
\end{lemma}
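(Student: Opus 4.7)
The plan is to argue by contradiction. Suppose $\partial_\infty\Gamma$ contains three distinct points $x_1,x_2,x_3$. A horodisk has $|\partial_\infty\Gamma|=1$, so Lemma~\ref{lem:G(p)} gives $G(p)\notin\partial_\infty\Gamma$ for every $p\in\Gamma$. The map $G$ is continuous on the connected set $\Gamma$, hence its image lies in a single open arc of $\partial_\infty\h^2\setminus\{x_1,x_2,x_3\}$. Renumber so this arc $I$ has endpoints $x_1,x_2$; then $x_3\notin\overline I$.

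I would then normalize, via an isometry, to the upper half-plane model with $x_3=\infty$ and $I\subset(\alpha,\beta)\subset\R$. The inward-normal half-geodesics $(pG(p))\subset\Omega$ provided by Lemma~\ref{lem:RRS} all end in the bounded interval $[\alpha,\beta]$. Pick a sequence $p_n\in\Gamma$ tending to $\infty$ in the ideal topology (which exists because $x_3\in\partial_\infty\Gamma$). After extraction, $G(p_n)\to\xi\in[\alpha,\beta]$, and the half-geodesics $(p_nG(p_n))$ converge locally uniformly to the vertical geodesic $V=\{x=\xi\}\cap\h^2$; since each half-geodesic lies in $\Omega$, we obtain $V\subset\overline\Omega$.

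The contradiction would then come from imitating the compact-support moving-plane argument in the proof of Lemma~\ref{lem:G(p)}, using the family of geodesics $\partial D_{\xi,t}=\{(x-\xi-t)^2+y^2=t^2\}$, whose reflections are hyperbolic isometries with one ideal endpoint pinned at $\xi$. On the compact truncations $\Omega\cap D_{\xi,t}\cap\{(x-\xi)^2+y^2\le R^2\}$, Alexandrov reflection either produces a first interior contact, forcing a bounded connected component of $\Omega$ (ruled out since $\Gamma$ is unbounded), or it forces symmetry of $\Omega$ with respect to $V$; letting $R\to\infty$ globalizes this symmetry. The barrier/open-and-closed step from the end of the proof of Lemma~\ref{lem:G(p)} then propagates $G\equiv\infty$ to all of $\Gamma$, making $\Omega$ a horodisk and contradicting $|\partial_\infty\Gamma|\ge 3$. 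Since all comparisons are carried out on compact sets, monotonicity of $f$ is not used, in line with the standing hypotheses of the section.

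The main obstacle is the degenerate case where $\xi$ coincides with an endpoint of $I$, that is, $\xi\in\{x_1,x_2\}\subset\partial_\infty\Gamma$: then one ideal endpoint of the reflection family sits in $\partial_\infty\Gamma$, and the Alexandrov set-up cannot be started from an empty configuration. Sidestepping this requires perturbing to a nearby family of geodesics whose two ideal endpoints lie strictly inside the open arc $I$ (possible because $I$ is open and $\xi\in\overline I$ is approached by the points $G(p_n)\in I$), and then taking a limit; verifying that the resulting symmetry indeed propagates back to $V$ is the delicate technical step.
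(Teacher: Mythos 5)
Your opening reduction is sound and close in spirit to the paper's: since a horodisk has a single ideal boundary point, Lemma~\ref{lem:G(p)} gives $G(\Gamma)\cap\partial_\infty\Gamma=\emptyset$, and continuity of $G$ on the connected curve $\Gamma$ confines $G(\Gamma)$ to a single arc $I$ of the complement of $\{x_1,x_2,x_3\}$; the extraction of the limit geodesic $V\subset\overline\Omega$ is also believable. But the argument does not close, for two independent reasons. First, the Alexandrov sweep with the circles $\partial D_{\xi,t}$ pinned at $\xi$ is not set up: in the proof of Lemma~\ref{lem:G(p)} the disks $D_t$ collapse onto an ideal point lying locally \emph{outside} $\Omega$ (below $\Gamma$ at the anchor point $p$), which is why $\Omega_{2R,t}$ is empty for small $t$ and the sweep can start; here $\xi$ is an ideal accumulation point of $\Omega$ (it is an endpoint of $V\subset\overline\Omega$), so there is no reason that $D_{\xi,t}\cap\Omega$ is empty, or even precompact in $\h^2$, for small $t$ --- nothing controls $\Omega$ near the ideal segment between $\xi$ and $\xi+2t$. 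You yourself flag the case $\xi\in\{x_1,x_2\}\subset\partial_\infty\Gamma$ as an unresolved ``delicate technical step,'' and that case cannot be waved away: it is the expected one. Second, even granting that $\Omega$ is symmetric about $V$, the conclusion ``$\Omega$ is a horodisk'' does not follow. The open-and-closed propagation at the end of the proof of Lemma~\ref{lem:G(p)} is anchored at a point $p\in\Gamma$ lying \emph{on} the axis of symmetry with $G(p)$ equal to an ideal endpoint of that axis; here $V$ need not meet $\Gamma$ at all, and no point $q\in\Gamma$ with $G(q)=\infty$ is produced (indeed $G(q)=\infty=x_3\in\partial_\infty\Gamma$ would already contradict $G(\Gamma)\subset I$, so there is nothing to ``propagate'').

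The paper's proof needs none of this machinery. It splits $\Gamma$ at a point into two proper arcs; each arc has a connected, hence interval, limit set at infinity, and $\partial_\infty\Gamma\ne\partial_\infty\h^2$ by Lemma~\ref{lem:G(p)}. If one of these intervals were nondegenerate, normalize so that it contains $[-1,1]\times\{0\}$ and $\infty\notin\partial_\infty\Gamma$; the corresponding arc must then cross the geodesic $\{x=0\}$ at a sequence of points of height tending to $0$, with the inward normal pointing downward at every other crossing, and at such a crossing of height less than $1$ the inward normal geodesic terminates in $[-1,1]\times\{0\}\subset\partial_\infty\Gamma$, contradicting Lemma~\ref{lem:G(p)}. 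Note that your normalization ($x_3=\infty$, $G(\Gamma)$ landing in a bounded interval) is one short step away from this: one of the two ends of $\Gamma$ accumulates at two of the three points and therefore already has a nondegenerate interval as its limit set, so the paper's crossing argument applies directly and no moving-plane or symmetry argument is required.
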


\begin{proof}
Let $p$ be a point in $\Gamma$, $\Gamma\setminus \{p\}$ has two connected components
$\Gamma_1$ and $\Gamma_2$ and both $\partial_\infty\Gamma_1$ and $\partial_\infty\Gamma_2$
contain at least one point. We want to prove that both are made of only one point.

From Lemma~\ref{lem:G(p)} we know that $\partial_\infty\Gamma\neq \partial_\infty \h^2$
(either $G(p)\in \partial_\infty \h^2\setminus\partial_\infty\Gamma$ or $\Ome$ is a horodisk and
$\partial_\infty\h^2$ is made of one point). Besides $\partial_\infty \Gamma_i$ are both
intervals of $\partial_\infty\h^2$

Let us assume that $\partial_\infty\Gamma_1$ is not reduced to one point. So, in the
half-space model, we can assume that $\infty\notin\partial_\infty\Gamma$, $\partial_\infty
\Gamma_1$ is an interval that contains $[-1,1]\times\{0\}$ and the geodesic $\{x=0\}$ is
transverse to $\Gamma$. We parametrized $\Gamma_1$ by arc-length on $\R_+^*$ and denote by
$(x_{\Gamma_1},y_{\Gamma_1})$ the parametrization. Because of the hypothesis on
$\Gamma_1$, there is an increasing sequence $(s_n)$ such that $\{s_n\}_{n\in\mathbb
N}=x_{\Gamma_1}^{-1}(0)$. We then have $y_{\Gamma_1}(s_n)\to 0$ and the inward unit normal
to $\Gamma_1$ points downward at $(x_{\Gamma_1},y_{\Gamma_1})(s_n)$ when $n$ is even or
$n$ is odd (depending on the unit normal at $(x_{\Gamma_1},y_{\Gamma_1})(s_0)$) (see
Figure~\ref{fig:fig3}). Let us assume that it is the case when $n$ is even. Choose $k$ large so
that $y_{\Gamma_1}(s_{2k})<1$, then $G((x_{\Gamma_1},y_{\Gamma_1})(s_{2k}))\in[-1,1]
\times\{0\}\subset\partial_\infty\Gamma$ which is a contradiction with
Lemma~\ref{lem:G(p)}.

\begin{figure}[!h]
\begin{center}
\resizebox{0.8\linewidth}{!}{\input{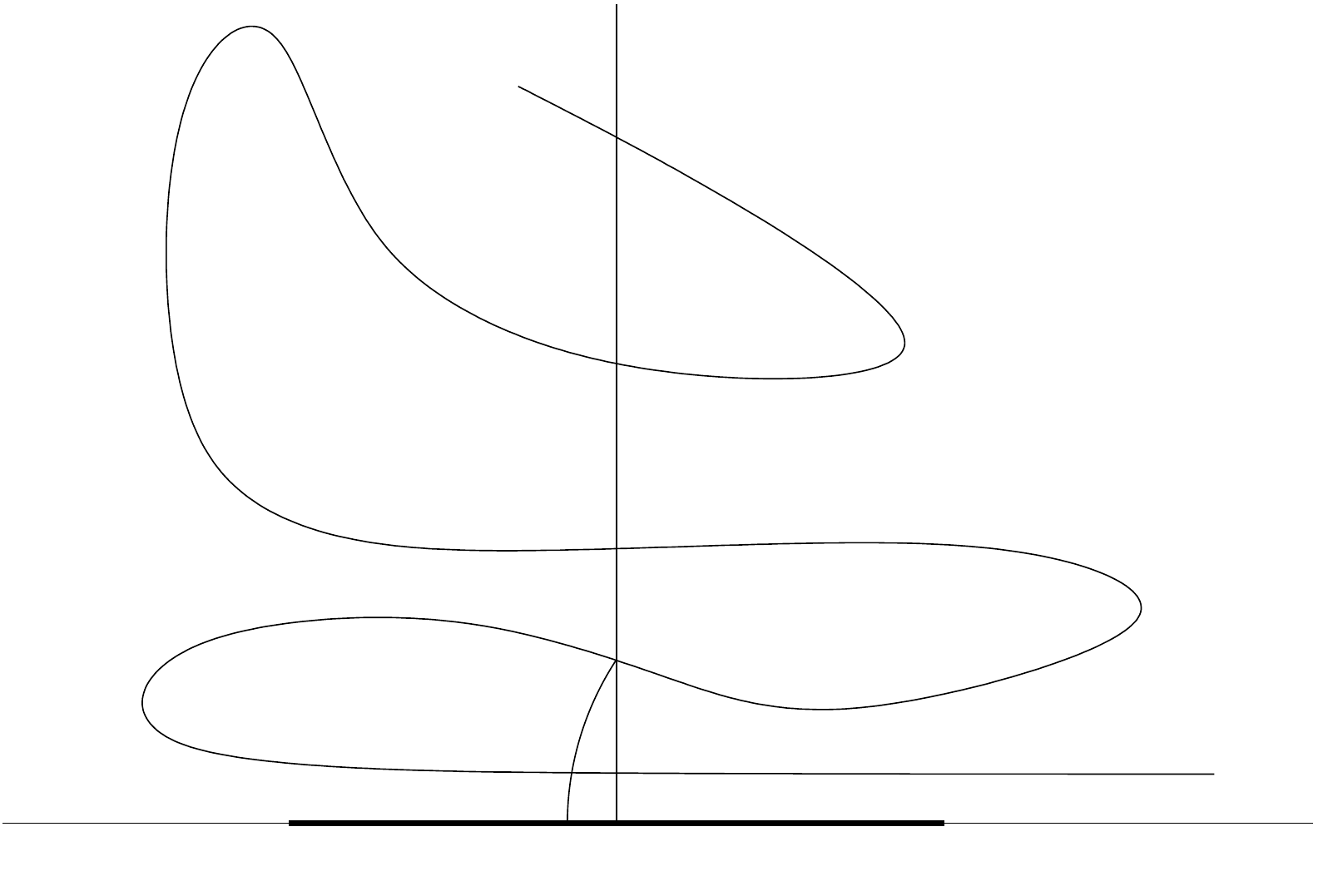_t}}
\end{center}
\caption{If $\Gamma$ accumulates on $\partial_\infty\h^2$}\label{fig:fig3}
\end{figure}

\end{proof}

Using this asymptotic behavior and assuming~\eqref{hyp:h2}, we can conclude:

\begin{theorem}[BCN-Conjecture in $\mathbb H ^2$]\label{Th:dim2}
Let $\Omega \subset \mathbb H ^2$ be a connected domain with $C^2$ boundary and such that $\h^2
\setminus \overline \Omega $ is connected. If there exists a function
$u\in{C}^{2}(\Omega)$ that solves the OEP \eqref{1.4} with $f$ satisfying~\eqref{hyp:h2},
then $\partial\Ome$ has constant curvature.

More precisely, $\Ome$ must be either
\begin{itemize}
\item a geodesic disk or the complement of a geodesic disk or,
\item a horodisk or the complement of a horodisk or,
\item a half-space determined by a complete equidistant curve, i.e. a complete curve of
constant geodesic curvature $k_g\in[0,1)$.
\end{itemize}
Moreover, in each case, $u$ is invariant by the isometries fixing $\Ome$.
\end{theorem}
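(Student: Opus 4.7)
The plan is to split into cases according to the number of points in $\partial_\infty\Gamma$, where $\Gamma=\partial\Omega$, and to invoke the classification results already established in the paper.

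If $\Omega$ is bounded, the classical Serrin moving plane argument adapted to $\h^n$ (see~\cite{mr,EM}) forces $\Omega$ to be a geodesic disk and $u$ to be radially symmetric. If $\h^2\setminus\overline\Omega$ is bounded instead, Corollary~\ref{CorBounded} applied to $U=\h^2\setminus\overline\Omega$ gives that $U$ is a geodesic disk, so $\Omega$ is the complement of one. Assume henceforth that both $\Omega$ and its complement are unbounded; then $\Gamma$ is unbounded, and Lemma~\ref{lem:2point} ensures that $\partial_\infty\Gamma$ consists of either one or two points.

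Suppose first $\partial_\infty\Gamma=\{x_0\}$. Since $\Gamma$ is a properly embedded connected $C^2$ curve whose two ends both converge to $x_0$, the set $\Gamma\cup\{x_0\}$ is a Jordan curve in the closed disk model $\overline{\h^2}$. Jordan separation together with the connectedness of $\Omega$ and of $\h^2\setminus\overline\Omega$ shows that exactly one of $\partial_\infty\Omega$ and $\partial_\infty(\h^2\setminus\overline\Omega)$ equals $\{x_0\}$ and the other equals $\partial_\infty\h^2$. In the former case, the Espinar--Mao classification of $f$-extremal domains with at most one ideal boundary point~\cite{EM} (recalled in the introduction) together with the unboundedness of $\Omega$ forces $\Omega$ to be a horodisk at $x_0$. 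In the latter, Theorem~\ref{OnePoint} applied to $U=\h^2\setminus\overline\Omega$ gives that $U$ is a horodisk at $x_0$, hence $\Omega$ is the complement of a horodisk. Suppose now $\partial_\infty\Gamma=\{x_0,y_0\}=:E$. The same Jordan argument yields that $\partial_\infty\Omega$ is one of the two closed arcs of $\partial_\infty\h^2$ with endpoints $x_0,y_0$, that is an asymptotic hemisphere $C$ associated to $E$. Letting $P$ be the geodesic with $\partial_\infty P=E$, Theorem~\ref{Th:Graphical} then gives $\Omega=P_c^+$ for some $c\in\R$, so $\Gamma=P_c$ has constant geodesic curvature of absolute value $|\tanh c|\in[0,1)$, and $u$ is translation invariant along $P$. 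In every case the invariance of $u$ under the stabilizer of $\Omega$ in $\mathrm{Iso}(\h^2)$ is part of the symmetry statements produced by the theorems invoked.

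The main obstacle is the topological input in the one-point case: justifying rigorously the clean dichotomy ``$\partial_\infty\Omega=\{x_0\}$ or $\partial_\infty\Omega=\partial_\infty\h^2$''. This requires promoting $\Gamma\cup\{x_0\}$ to a genuine Jordan curve in $\overline{\h^2}$ by using properness of the embedding $\Gamma\hookrightarrow\h^2$ together with the hypothesis $\partial_\infty\Gamma=\{x_0\}$, and then combining Jordan separation in the closed disk with the connectedness hypotheses on $\Omega$ and on $\h^2\setminus\overline\Omega$.
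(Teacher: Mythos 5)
Your proposal is correct and follows essentially the same route as the paper's own proof: a case split on $\partial_\infty\Gamma$ via Lemma~\ref{lem:2point}, then invoking Theorem~\ref{CorBounded}, Theorem~\ref{OnePoint}, Theorem~\ref{Th:Graphical} and the Espinar--Mao results for the bounded and horodisk cases. The only addition is your explicit Jordan-curve justification of the dichotomy $\partial_\infty\Omega=\{x_0\}$ versus $\partial_\infty(\h^2\setminus\overline\Omega)=\{x_0\}$, which the paper leaves implicit.
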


\begin{proof}
If $\partial\Ome$ is compact, then either $\Ome$ is compact and is a disk
\cite[Theorem~3.3]{EM} or $\Ome$ is unbounded and is the exterior of a disk by
Theorem~\ref{CorBounded}. In both cases, $u$ is radially symmetric.

If $\partial\Ome$ is not bounded, Lemma~\ref{lem:2point} implies that
$\partial_\infty(\partial\Ome)=\{a\}$ or $\{a,b\}\subset\partial_\infty\h^2$. In the first
case, either $\partial_\infty \Ome=\{a\}$ and $\Ome$ is a horodisk \cite[Theorem~3.8]{EM}
or $\partial_\infty(\h^2\setminus\overline\Ome)=\{a\}$ and $\Ome$ is the complement of a
horodisk by Theorem~\ref{OnePoint}. In both cases, $u$ is invariant by parabolic
isometries that fix $a$. If $\partial_\infty(\partial\Ome)=\{a,b\}$,
Theorem~\ref{Th:Graphical} implies that $\Ome$ is a a half-space determined by a complete
equidistant curve and $u$ is invariant by hyperbolic translations along the complete
geodesic joining $a ,b \in \partial _{\infty}\h^{2}$.
\end{proof}

\section*{Acknowledgments}
\renewcommand{\thesection}{\arabic{section}}
\renewcommand{\theequation}{\thesection.\arabic{equation}}
\setcounter{equation}{0} \setcounter{maintheorem}{0}

The first author, Jos\'{e} M. Espinar, is partially supported by Spanish MEC-FEDER Grant
MTM2013-43970-P; CNPq-Brazil Grants 405732/2013-9 and 14/2012 - Universal, Grant
302669/2011-6 - Produtividade; FAPERJ Grant 25/2014 - Jovem Cientista de Nosso Estado.
Alberto Farina is partially supported by the ERC grant EPSILON ({\it Elliptic Pde's and
Symmetry of Interfaces and Layers for Odd Nonlinearities}) and by the ERC grant COMPAT
({\it Complex Patterns for Strongly Interacting Dynamical Systems}). Laurent Mazet is
partially supported by the ANR-11-IS01-0002 grant.

 \end{document}